\tikzset{elliptic state/.style={draw,ellipse}}
\theoremstyle{plain}
\newtheorem{thm}{Theorem}[section]
\newtheorem{prop}[thm]{Proposition}
\newtheorem{lem}[thm]{Lemma}
\newtheorem{cor}[thm]{Corollary}
\newtheorem{con}[thm]{Conjecture}
\theoremstyle{definition}
\newtheorem{rem}[thm]{Remark}
\author{\L{}ukasz Merta}
\title[Formal inverses of certain automatic sequences]{Formal inverses of the generalized Thue--Morse sequences and variations of the Rudin--Shapiro sequence}
\affiliation{
Jagiellonian University in Krak\'{o}w, Poland}
\keywords{Thue--Morse sequence, Rudin--Shapiro sequence, automatic sequence, formal power series}
\begin{document}
\publicationdetails{22}{2020}{1}{15}{4954}
\maketitle
\begin{abstract}
A formal inverse of a given automatic sequence (the sequence of coefficients of the composition inverse of its associated formal power series) is also automatic. The comparison of properties of the original sequence and its formal inverse is an interesting problem. Such an analysis has been done before for the Thue--Morse sequence. In this paper, we describe arithmetic properties of formal inverses of the generalized Thue--Morse sequences and formal inverses of two modifications of the Rudin--Shapiro sequence. In each case, we give the recurrence relations and the automaton, then we analyze the lengths of factors consisting of identical letters as well as the frequencies of letters. We also compare the obtained results with the original sequences.
\end{abstract}

\section{Introduction}

Let \(k \geq 2\). An infinite sequence 
\begin{math} (a_n)_{n \in \naturals} \end{math}
 is called \(k\)-automatic, if the \(n\)-th term is generated by a finite automaton with the base-\(k\) expansion of \(n\) as input.

The Thue--Morse sequence 
\begin{math} (t_n)_{n \in \naturals} \end{math}
(sequence A010060 in \cite{OE}) is one of the best-known examples of a non-trivial \(2\)-automatic sequence. Its \(n\)-th term is equal to the sum of digits in the binary expansion of \(n\) taken modulo \(2\). This sequence is generated by an automaton with \(2\) states and it satisfies the following recurrence relations:
\begin{displaymath} 
t_{2n} = t_n, \quad t_{2n+1} = 1 - t_n, \quad n \in \naturals.
\end{displaymath}

The Thue--Morse sequence can be easily generalized. For a prime number \(p\), we can define the sequence
\begin{math} (t_n^{(p)})_{n \in \naturals} \end{math}
as a sequence such that \(t_n^{(p)}\) is equal to the sum of digits in the base-\(p\) expansion of \(n\) taken modulo \(p\). This sequence is then \(p\)-automatic and in the case \(p=2\) we retrieve the original sequence
\begin{math} (t_n)_{n \in \naturals} \end{math}.

Another known example of a \(2\)-automatic sequence is the Rudin--Shapiro sequence \begin{math} (r_n)_{n \in \naturals} \end{math} 
(sequence A020985 in \cite{OE}). Its \(n\)-th term is equal to \(1\) if the number of (possibly overlapping) occurrences of \(11\) in the binary expansion of \(n\) is even and \(-1\) otherwise. It is generated by an automaton with \(4\) states and it satisfies the following relations:
\begin{displaymath} 
r_0 = 1, \quad r_{2n} = r_{4n+1} = r_n, \quad r_{4n+3} = -r_{2n+1}, \quad n \in \naturals.
\end{displaymath}

For an infinite sequence 
\begin{math} (a_n)_{n \in \naturals} \end{math}
with values in \(\mathbb{F}_p\), we can consider a formal power series 
\begin{math} F = \sum_{n=0}^{\infty}a_nX^n \in \mathbb{F}_p[\![X]\!] \end{math}. 
If \(a_0 = 0\) and \(a_1 \neq 0\), then there exists a unique formal power series 
\begin{math} G = \sum_{n=0}^{\infty}b_nX^n \in \mathbb{F}_p[\![X]\!] \end{math}
such that 
\begin{math} F(G(X)) = G(F(X)) = X \end{math}.
The obtained sequence 
\begin{math} (b_n)_{n \in \naturals} \end{math}
is then called the formal inverse of the sequence
\begin{math} (a_n)_{n \in \naturals} \end{math}.
If
\begin{math} (a_n)_{n \in \naturals} \end{math}
is \(p\)-automatic, then its formal inverse is also \(p\)-automatic \cite[Theorem 12.2.5]{AS}.

The main problem considered in this paper is to study the formal inverses of some automatic sequences and compare them to the original sequences. This problem was already discussed in \cite{PTM} by M. Gawron and M. Ulas, who studied the formal inverse of the Thue--Morse sequence
\begin{math} (t_n)_{n \in \naturals} \end{math},
which they denoted by 
\begin{math} (c_n)_{n \in \naturals} \end{math}.
The authors found many interesting properties of this sequence, including the recurrence relations, the algebraic relation for the associated formal power series and the automaton generating the sequence
\begin{math} (c_n)_{n \in \naturals} \end{math}.
They also studied the possible lengths of strings of consecutive \(0\)'s and consecutive \(1\)'s in this sequence. Analogous results have been obtained for two variations of the Baum--Sweet sequence in \cite{BS}. A recent note by N. Rampersad and M. Stipulanti \cite{PD} discusses similar results for the inverse of the period-doubling sequence
\begin{math} (d_n)_{n \in \naturals} \end{math},
whose \(n\)-th term is equal to the exponent of the highest power of \(2\) dividing \(n+1\). In this paper, we consider sequences mentioned earlier --- the generalized Thue--Morse sequences and the variations of the Rudin-Shapiro sequence.

The paper is divided into two sections. In the first section, we study properties of the formal inverses of the sequences
\begin{math} (t_n^{(p)})_{n \in \naturals} \end{math},
which we denote by
\begin{math} (c_n^{(p)})_{n \in \naturals} \end{math}.
We start with finding the recurrence relation for the sequence
\begin{math} (c_n^{(p)})_{n \in \naturals} \end{math}
in the general case. Then, we study the sequence
\begin{math} (c_n^{(p)})_{n \in \naturals} \end{math}
in the cases \(p = 3\) and \(p = 5\). In each case, we start with the automaton that generates the sequence. We then study properties of the sequence, based on the obtained automaton as well as properties of the automaton itself. We discuss the maximal number of consecutive \(0\)'s and the frequency of \(0\)'s in both sequences. We also introduce some results involving consecutive nonzero terms. At the end of this section,  we provide a list of conjectures for the general case, based on the obtained results.

In the second section, we consider the Rudin--Shapiro sequence, with all \(-1\) terms changed to \(0\) so it can be considered as a sequence with values in \(\mathbb{F}_2\). This sequence does not meet the conditions for a formal inverse to exist, hence we consider the following variations:
\begin{displaymath}
r'_n = \left\{ \begin{array}{ll}
0 & \text{ if } n = 0, \\ r_n & \text{ otherwise},
\end{array} \right.
\quad
r''_n = \left\{ \begin{array}{ll}
0 & \text{ if } n = 0, \\ r_{n-1} & \text{ otherwise}.
\end{array} \right.
\end{displaymath}
We then study the formal inverses of those sequences, which we denote by
\begin{math} (u_n)_{n \in \naturals} \end{math}
and
\begin{math} (v_n)_{n \in \naturals} \end{math}, 
respectively. We focus on the same aspects as in the previous section --- we discuss properties involving the frequency of letters and the maximal number of consecutive \(0\)'s and \(1\)'s in both sequences. We also verify whether the automata generating these sequences are synchronizing.

Many results in this paper are found and proved using some specific computer programs. Each automaton is computed using the Mathematica package \texttt{IntegerSequences} written by Eric Rowland (\verb|https://people.hofstra.edu/Eric_Rowland/packages.html|). This package allows us to compute the automaton generating a given sequence by using the algebraic relation for the associated formal power series (the method is implemented from the proof of\cite[Theorem 12.2.5]{AS}). We also use the automatic theorem-proving software called \texttt{Walnut}, written by Hamoon Mousavi \cite{WAL}, to prove some technical lemmas involving the obtained sequences. Furthermore, we use a couple of small computer programs written in \texttt{C++} or \texttt{R} to test some properties of the obtained automata (such as finding the formulas for the sequences
\begin{math} (g_n)_{n \in \naturals} \end{math}
and
\begin{math} (h_n)_{n \in \naturals} \end{math}
in the proof of Theorem \ref{c_consec_12} or finding the structure of the automaton \(A_5\) in Subsection \ref{section_thue5}). The source code for all the computations is available from the author.

Each automaton in this paper is considered with input processed starting from the least significant digit and every state is labeled with the corresponding subsequence. Moreover, we describe every automaton as a \(6\)-tuple 
\begin{math} (Q, \Sigma, \delta, q_0, \Delta, \tau) \end{math}, using the same notation as in \cite{AS}. Since we focus only on one particular automaton at a time, we use the same notation for all automata.

Similarly to \cite{BS}, this paper is an extended version of two chapters of the Master's thesis defended at the Jagiellonian University in 2017 \cite{MT}.

\section{Generalized Thue--Morse sequences} \label{section_thue}
\subsection{Basic results}

Let \(p\) be a prime number. We let \(s_p(n)\) denote the sum of digits of \(n \in \naturals\) in base \(p\). We define the sequence
\begin{math} (t_n^{(p)})_{n \in \naturals} \end{math}
as
\begin{displaymath}
t_n^{(p)} = s_p(n) \quad \text{in } \mathbb{F}_p.
\end{displaymath}
It is clear that the sequence 
\begin{math} (t_n^{(p)})_{n \in \naturals} \end{math}
satisfies the following relations:
\begin{equation} \label{cp_recur}
t_0^{(p)} = 0, \quad t_{pn+i}^{(p)} = t_n^{(p)} + i.
\end{equation}

Let
\begin{math} F_p = \sum_{n=0}^{\infty}t_n^{(p)}X^n \in \mathbb{F}_p[\![X]\!] \end{math}. 
From the recurrence relations, it is easy to prove (see \cite[Example 12.1.3]{AS}) that \(F_p\) satisfies the following algebraic relation: 
\begin{equation} \label{Fp_equation}
(1 - X)^{p+1}F_p^p - (1 - X)^2F_p + X = 0.
\end{equation}
Since we have \(t_0^{(p)} = 0\) and \(t_1^{(p)} = 1\), we know that for any prime \(p\) there exists a formal series \(C_p\) such that 
\begin{math} C_p(F_p(X)) = F_p(C_p(X)) = X \end{math}. 
We can easily find the analogous relation for \(C_p\), by left composing equation \eqref{Fp_equation} with \(C_p\). We thus have
\begin{equation} \label{Cp_equation}
(1 - C_p)^{p+1}X^p - (1 - C_p)^2X + C_p = 0.
\end{equation}

Let 
\begin{math} S_p = X(1 - C_p) \end{math} 
and let 
\begin{math} (s_n^{(p)})_{n \in \naturals} \end{math}
be the sequence of coefficients of \(S_p\). From equation \eqref{Cp_equation}, the series \(S_p\) satisfies the following algebraic relation: 
\begin{equation} \label{Sp_equation}
S_p^{p+1} - S_p^2 - S_p + X = 0.
\end{equation}

The following proposition gives the recurrence relations for the sequence 
\begin{math} (s_n^{(p)})_{n \in \naturals} \end{math}.

\begin{prop} \label{s_recurrence}
The sequence
\begin{math} (s_n^{(p)})_{n \in \naturals} \end{math}
satisfies \(s_0^{(p)} = 0\), \(s_1^{(p)} = 1\) and
\begin{displaymath}
s_n^{(p)} = s_1^{(p)}w_{n-1}^{(p)} + s_2^{(p)}w_{n-2}^{(p)} + \dots + s_{n-1}^{(p)}w_1^{(p)} \quad \text{for } n \geq 2,
\end{displaymath}
where the sequence
\begin{math} (w_n^{(p)})_{n \in \naturals} \end{math}
satisfies \(w_0^{(p)} = -1\) and
\begin{displaymath}
w_n^{(p)} = \left\{ \begin{array}{ll}
-s_n^{(p)} & \text{if } \, p \nmid n, \\
-s_n^{(p)} + s_{n/p}^{(p)} & \text{if } \, p \mid n \end{array} \right. \quad \text{for } n \geq 1.
\end{displaymath}
\end{prop}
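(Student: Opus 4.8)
The plan is to translate the claimed recurrence into a single identity between formal power series and then read it off from the algebraic relation \eqref{Sp_equation}. Write $S_p = \sum_{n\ge 0} s_n^{(p)} X^n$ and introduce the generating function $W_p = \sum_{n \ge 0} w_n^{(p)} X^n$ of the auxiliary sequence defined in the statement. First I would record the two initial values: since $S_p = X(1 - C_p)$ and $C_p$ has zero constant term, we get $s_0^{(p)} = 0$; comparing the lowest-order terms of \eqref{Sp_equation}, where $S_p^{p+1}$ and $S_p^2$ have order at least $2$, forces $S_p = X + O(X^2)$, i.e. $s_1^{(p)} = 1$.

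Next I would observe that, because $s_0^{(p)} = 0$, the coefficient of $X^n$ in the product $S_p W_p$ equals $\sum_{k=1}^n s_k^{(p)} w_{n-k}^{(p)} = \big(\sum_{k=1}^{n-1} s_k^{(p)} w_{n-k}^{(p)}\big) + s_n^{(p)} w_0^{(p)}$. Since $w_0^{(p)} = -1$, requiring this coefficient to vanish for every $n \ge 2$ is exactly equivalent to the asserted recurrence $s_n^{(p)} = \sum_{k=1}^{n-1} s_k^{(p)} w_{n-k}^{(p)}$. Thus the whole proposition reduces to showing that $S_p W_p$ has all coefficients zero in degrees $\ge 2$.

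To evaluate $S_p W_p$ I would first put $W_p$ in closed form. Splitting the definition of $w_n^{(p)}$ gives $W_p = -1 - \sum_{n \ge 1} s_n^{(p)} X^n + \sum_{m \ge 1} s_m^{(p)} X^{pm} = -1 - S_p + S_p(X^p)$, using $s_0^{(p)} = 0$. The key point is the Frobenius identity in $\mathbb{F}_p[\![X]\!]$: since each $s_n^{(p)} \in \mathbb{F}_p$ satisfies $(s_n^{(p)})^p = s_n^{(p)}$, we have $S_p^p = \sum_n (s_n^{(p)})^p X^{pn} = \sum_n s_n^{(p)} X^{pn} = S_p(X^p)$, whence $W_p = S_p^p - S_p - 1$. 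Multiplying by $S_p$ and invoking \eqref{Sp_equation} then gives $S_p W_p = S_p^{p+1} - S_p^2 - S_p = -X$. Since $-X$ has all coefficients zero except in degree $1$, every coefficient of $S_p W_p$ in degree $\ge 2$ vanishes, which is precisely what was needed.

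I expect the only genuinely substantive step to be the Frobenius identification $S_p(X^p) = S_p^p$, which is what makes the correction term $+s_{n/p}^{(p)}$ on multiples of $p$ fit the algebraic relation and relies essentially on working in characteristic $p$. Everything else — the two initial values and the convolution bookkeeping that converts the recurrence into the coefficientwise statement $[X^n](S_p W_p) = 0$ for $n \ge 2$ — is routine.
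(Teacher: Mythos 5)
Your argument is correct and is essentially the paper's own proof: both hinge on rewriting \eqref{Sp_equation} as $S_p\bigl(S_p(X^p) - S_p - 1\bigr) = -X$ via the Frobenius identity $S_p^p = S_p(X^p)$ in $\mathbb{F}_p[\![X]\!]$, identifying $W_p = S_p(X^p) - S_p - 1$ with the generating function of $(w_n^{(p)})$, and extracting the recurrence by comparing coefficients of the product. The only (harmless) differences are the direction in which you match $W_p$ with the stated formula for $w_n^{(p)}$ and your making explicit the Frobenius step that the paper leaves implicit.
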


\begin{proof}
From equation \eqref{Sp_equation}, we have
\begin{equation} \label{Sp_equation2}
S_p(X)(S_p(X^p) - S_p(X) - 1) = -X.
\end{equation}
Let
\begin{math} W_p(X) = S_p(X^p) - S_p(X) - 1 \end{math}
and let 
\begin{math} (w_n^{(p)})_{n \in \naturals} \end{math}
be the sequence of coefficients of \(W_p\). We thus have
\begin{displaymath}
\sum_{n=0}^{\infty}w_n^{(p)}X^n = \sum_{n=0}^{\infty}s_n^{(p)}X^{pn} - \sum_{n=0}^{\infty}s_n^{(p)}X^n - 1.
\end{displaymath}
Comparing the coefficients on both sides of this equality, we obtain the recurrence relations for 
\begin{math} (w_n^{(p)})_{n \in \naturals} \end{math}
presented in the statement of our proposition. From \eqref{Sp_equation2}, we obtain the following equation:
\begin{displaymath}
\left( \sum_{n=0}^{\infty}s_n^{(p)}X^n \right)
\left( \sum_{n=0}^{\infty}w_n^{(p)}X^n \right) = \sum_{n=0}^{\infty} \left( \sum_{k=0}^n s_k^{(p)}w_{n-k}^{(p)} \right)X^n = -X.
\end{displaymath}
Hence, from the last equality, we can see that \(s_0^{(p)} = 0\), \(s_1^{(p)} = 1\) and
\begin{displaymath}
s_0^{(p)}w_n^{(p)} + s_1^{(p)}w_{n-1}^{(p)} + \dots + s_n^{(p)}w_0^{(p)} = 0
\end{displaymath}
for \(n \geq 2\). Since \(s_0^{(p)} = 0\) and \(w_0^{(p)} = -1\), this equation is equivalent to the recurrence relation in our proposition.
\end{proof}

Let
\begin{math} (c_n^{(p)})_{n \in \naturals} \end{math}
be the sequence of coefficients of \(C_p\). Since we know the relation 
\begin{math} S_p = X(1 - C_p) \end{math},
we can easily compute the terms of the sequence
\begin{math} (c_n^{(p)})_{n \in \naturals} \end{math}.
We have \(c_0^{(p)} = 0\), \(c_1^{(p)} = 1\) and
\begin{math} c_n^{(p)} = -s_{n+1}^{(p)}\end{math}
for \(n \geq 2\).

In the remaining part of this section, we are going to focus on properties of the sequence
\begin{math} (c_n^{(p)})_{n \in \naturals} \end{math}
for \(p \in \{2,3,5\}\). The case \(p = 2\) was already described in \cite{PTM}. The authors proved that the sequence 
\begin{math} (c_n^{(2)})_{n \in \naturals} \end{math}
has the following properties:
\begin{itemize}
\item The sequence is generated by an automaton with \(5\) states (with input represented in base \(4\)).
\item There are arbitrarily long sequences of consecutive \(0\)'s.
\item The maximal number of consecutive \(1\)'s is equal to \(4\).
\item The frequency of \(0\)'s is equal to \(1\).
\end{itemize}

The last property was not stated directly in \cite{PTM}, but it can be proved using the fact that the automaton generating the sequence
\begin{math} (c_n^{(2)})_{n \in \naturals} \end{math}
is synchronizing (the proof of this fact is similar to the proof of Corollary \ref{c_zero_freq} in the next subsection).

\vspace{0.3 cm}
Below we show another automaton generating the sequence 
\begin{math} (c_n^{(2)})_{n \in \naturals} \end{math},
(computed in Mathematica). This time the automaton has \(8\) states and input is represented in base \(2\). This automaton is synchronizing as well and the shortest synchronizing word is \(011\).

\begin{figure}[ht!]
\begin{center}
\begin{tikzpicture}[->, shorten >= 1pt, node distance=2.5 cm, on grid, auto]
	\node[elliptic state, inner sep=3pt, line width = 1.5pt] (c_1) {\(c^{(2)}_n\)};
	\node[elliptic state, inner sep=2pt] (c_2) [above right=of c_1] {\(c^{(2)}_{2n}\)};
	\node[elliptic state, inner sep=2pt] (c_3) [right=of c_2] {\(c^{(2)}_{4n}\)};
    \node[elliptic state, inner sep=2pt] (c_4) [right=4cm of c_3] {\(c^{(2)}_{8n+3}\)};	
    \node[elliptic state, inner sep=2pt] (c_5) [below right=of c_1] {\(c^{(2)}_{2n+1}\)};
	\node[elliptic state, inner sep=2pt] (c_6) [right=of c_5] {\(c^{(2)}_{4n+3}\)};
	\node[elliptic state, inner sep=2pt] (c_7) [below right=of c_2] {\(c^{(2)}_{4n+1}\)};
    \node[elliptic state, inner sep=2pt] (c_8) [right=3cm of c_7] {\(c^{(2)}_{8n+1}\)};	
	 \path[->]
	 (c_1) edge node {0} (c_2)
	       edge node [swap] {1} (c_5)
	 (c_2) edge node [swap] {0} (c_3)
	       edge node [swap] {1} (c_7)
	 (c_3) edge [bend right] node [swap] {0} (c_2)
	 	   edge node {1} (c_4)
	 (c_4) edge [loop right] node {0,1} (c_4)
	 (c_5) edge node {0} (c_7)
	       edge node {1} (c_6)
	 (c_6) edge [bend right=70] node [swap] {0} (c_4)
	       edge [bend left] node {1} (c_5)
	 (c_7) edge node {0} (c_8)
	 	   edge [bend left=10] node {1} (c_4)
     (c_8) edge [bend left] node {0,1} (c_7);     
\end{tikzpicture}
\vspace{-0.2 cm}
\caption{The automaton \(A_2\) generating the sequence \((c^{(2)}_n)_{n \in \naturals}\). \label{c2_auto}}
\end{center}
\end{figure}

In the following subsections, we are going to prove analogous properties for the sequences 
\begin{math} (c_n^{(3)})_{n \in \naturals} \end{math} 
and 
\begin{math} (c_n^{(5)})_{n \in \naturals} \end{math}.

\subsection{\texorpdfstring{The case \(p = 3\)}{The case p=3}}
\label{section_thue3}

We start with properties of the sequence 
\begin{math} (c_n^{(p)})_{n \in \naturals} \end{math}
in the case \(p=3\). This is the sequence A053838 in \cite{OE}. From now on, to simplify the notation, we write 
\begin{math} (c_n)_{n \in \naturals} \end{math}
instead of 
\begin{math} (c_n^{(3)})_{n \in \naturals} \end{math}.

The formal power series 
\begin{math} C_3 = \sum_{n=0}^{\infty}c_nX^n \end{math}
satisfies the following algebraic relation:
\begin{displaymath}
(1 - C_3)^4X^3 - (1 - C_3)^2X + X = 0.
\end{displaymath}
This equation can be used to determine the automaton
\begin{math} A_3 = (Q, \Sigma_3, \delta, [c_n], \Sigma_3, \tau) \end{math}
that generates the sequence 
\begin{math} (c_n)_{n \in \naturals} \end{math}. 
We use Wolfram Mathematica to perform all the computations. As a result, we obtain an automaton with \(28\) states -- it is shown below in Figure \ref{c_auto}. Since we labeled the states with the corresponding subsequences (state \(q\) is labeled with the subsequence of the form 
\begin{math} (c_{3^kn + l})_{n \in \naturals} \end{math}, 
where \(k \in \naturals\), \(0 \leq l < 3^k\), such that
\begin{math} c_{3^kn + l} = \tau(\delta(q, (n)_3)) \end{math}
for all \(n\)) instead of the output values, we also provide a table with the values of the function \(\tau\).

\begin{figure}[ht!] 
\includegraphics[width=\textwidth]{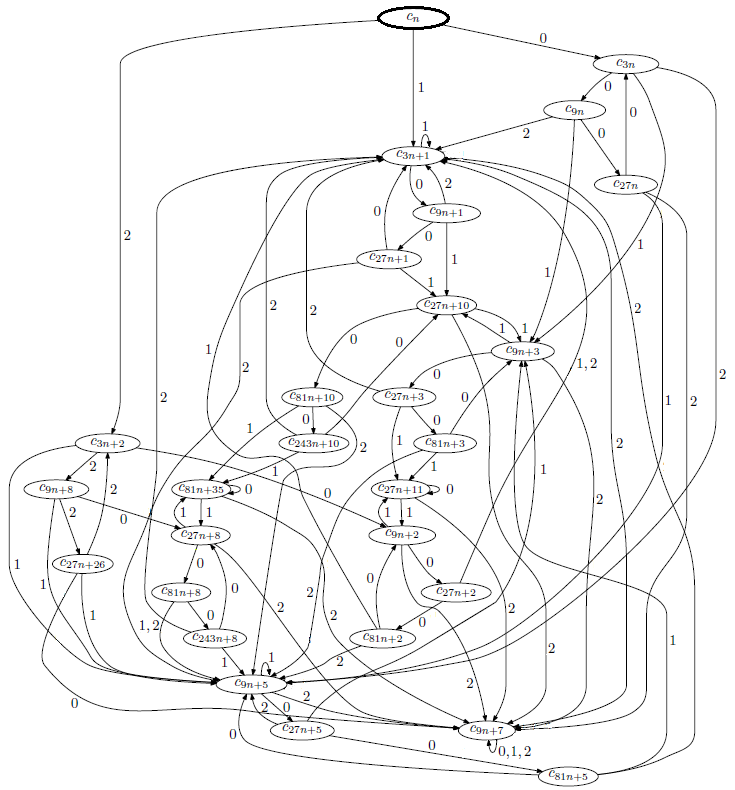}
\caption{The automaton \(A_3\) generating the sequence \((c_n)_{n \in \naturals}\). \label{c_auto}}
\end{figure}

\begin{table}[ht!] 
\centering
\begin{tabular}{|c|c||c|c||c|c||c|c|}
\hline
\(q\) & \(\tau(q)\) & \(q\) & \(\tau(q)\) & \(q\) & \(\tau(q)\) & \(q\) & \(\tau(q)\) \\
\hline
\([c_n]\) & \(0\) & \([c_{9n+3}]\) & \(1\) & \([c_{27n+3}]\) & \(1\) & \([c_{81n+3}]\) & \(1\) \\
\hline
\([c_{3n}]\) & \(0\) & \([c_{9n+5}]\) & \(2\) & \([c_{27n+5}]\) & \(2\) & \([c_{81n+5}]\) & \(2\) \\
\hline
\([c_{3n+1}]\) & \(1\) & \([c_{9n+7}]\) & \(0\) & \([c_{27n+8}]\) & \(2\) & \([c_{81n+8}]\) & \(2\) \\
\hline
\([c_{3n+2}]\) & \(1\) & \([c_{9n+8}]\) & \(2\) & \([c_{27n+10}]\) & \(2\) & \([c_{81n+10}]\) & \(2\) \\
\hline
\([c_{9n}]\) & \(0\) & \([c_{27n}]\) & \(0\) & \([c_{27n+11}]\) & \(0\) & \([c_{81n+35}]\) & \(0\) \\
\hline
\([c_{9n+1}]\) & \(1\) & \([c_{27n+1}]\) & \(1\) & \([c_{27n+26}]\) & \(0\) & \([c_{243n+8}]\) & \(2\) \\
\hline
\([c_{9n+2}]\) & \(1\) & \([c_{27n+2}]\) & \(1\) & \([c_{81n+2}]\) & \(1\) & \([c_{243n+10}]\) & \(2\) \\
\hline
\end{tabular}
\caption{The output values of the automaton \(A_3\). \label{c_out}}
\end{table}

We can now use the automaton \(A_3\) to determine some basic properties of the sequence 
\begin{math} (c_n)_{n \in \naturals} \end{math}. 
We start with the following remark.

\begin{rem} The automaton \(A_3\) is synchronizing and the shortest synchronizing word is \(12\).
\end{rem}

This statement can be easily verified by hand -- we have 
\begin{math} \delta(q, 12) = [c_{9n+7}] \end{math} 
for all \(q \in Q\) and there is no other synchronizing word of length \(2\). Furthermore, we have 
\begin{math} \delta([c_{9n+7}], a) = [c_{9n+7}] \end{math}
for all \(a \in \Sigma_3\) and \(\tau([c_{9n+7}]) = 0\). We therefore have the following result.

\begin{samepage}
\begin{cor} \label{c_eq_zero} We have the following properties:
\begin{enumerate}[(a)]
\parskip-4pt
\item We have \(c_{9n+7} = 0\) for all \(n \in \naturals\).
\item \label{c_eq_part2} If the base-\(3\) representation of \(n\) contains \(21\), then \(c_n = 0\).
\end{enumerate}
\end{cor}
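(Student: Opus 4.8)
The plan is to derive both statements directly from the structure of $A_3$ recorded just above, using three facts: that $A_3$ reads the base-$3$ expansion of its argument starting from the least significant digit; that $12$ is a synchronizing word, so that $\delta(q,12)=[c_{9n+7}]$ for every $q\in Q$; and that $[c_{9n+7}]$ is absorbing, with $\delta([c_{9n+7}],a)=[c_{9n+7}]$ for every $a\in\Sigma_3$ and $\tau([c_{9n+7}])=0$. The whole corollary is then a matter of feeding the right input words into the automaton and observing that the computation gets trapped in a zero-output sink.

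For part (a), I would evaluate $c_{9m+7}$ by feeding the base-$3$ digits of $9m+7$ into $A_3$ from the initial state $[c_n]$. Since $7=(21)_3$, the two least significant digits of $9m+7$ are $1$ and $2$ (in that order, read least-significant-first), while the remaining digits are exactly those of $m$. Hence after reading the first two input symbols the automaton is in state $\delta([c_n],12)=[c_{9n+7}]$, and because this state is absorbing it stays there while the digits of $m$ are read, so the output is $\tau([c_{9n+7}])=0$. This yields $c_{9m+7}=0$ for every $m$, which is (a).

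For part (b), suppose the base-$3$ representation of $n$, written most-significant-digit first, contains the factor $21$, say $d_{i+1}=2$ and $d_i=1$ for some $i$. When $A_3$ processes $n$ from the least significant digit it reads $\dots,d_{i-1},d_i,d_{i+1},\dots$, so at the $i$-th and $(i{+}1)$-st steps it reads the symbols $1$ then $2$, i.e.\ the word $12$. Whatever state $q$ has been reached after the first $i$ digits, reading these two symbols sends the automaton to $\delta(q,12)=[c_{9n+7}]$; the absorbing property then keeps it there for the rest of the input, and the final output is $\tau([c_{9n+7}])=0$. Therefore $c_n=0$, which proves (b) and, in particular, subsumes (a), since $9m+7$ ends in the digits $21$.

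The argument is short, and the only step that needs genuine care is the bookkeeping of digit order: because the automaton consumes digits least-significant-first, the factor $21$ appearing in the ordinary (most-significant-first) base-$3$ expansion is precisely what is presented to $A_3$ as the input word $12$. Pinning down this correspondence is the whole content of the proof; once it is fixed, the absorbing, zero-output nature of the synchronizing target state $[c_{9n+7}]$ does the rest.
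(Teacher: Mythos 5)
Your proof is correct and follows exactly the route the paper intends: the corollary is stated there as an immediate consequence of the remark that $12$ is a synchronizing word sending every state to the absorbing, zero-output state $[c_{9n+7}]$, which is precisely the argument you spell out (including the careful least-significant-digit-first bookkeeping that turns the factor $21$ of $(n)_3$ into the input word $12$). Nothing further is needed.
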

\end{samepage}

\begin{rem} The converse of \eqref{c_eq_part2} in the above corollary does not hold. For instance, we have \(c_{11} = 0\) and \((11)_3 = 102\). \end{rem}

Corollary \hyperref[c_eq_part2]{\ref*{c_eq_zero}(\ref*{c_eq_part2})} gives us the full information about the frequency of \(0\)'s in the sequence
\begin{math} (c_n)_{n \in \naturals} \end{math}
and the length of the strings of consecutive \(0\)'s. We introduce these properties in the following corollaries.

\begin{cor} \label{c_zero_seq} The sequence 
\begin{math} (c_n)_{n \in \naturals} \end{math}
contains arbitrarily long sequences of consecutive \(0\)'s.
\end{cor}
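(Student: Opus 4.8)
The plan is to reduce the statement to a purely combinatorial fact about base-3 expansions, using the sufficient condition for vanishing provided by Corollary~\ref{c_eq_zero}(b): if the base-3 representation of \(n\) contains the factor \(21\), then \(c_n = 0\). Thus it suffices to produce, for every \(k \in \naturals\), a block of consecutive integers whose length tends to infinity with \(k\) and each of whose base-3 expansions contains \(21\).

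First I would fix \(k\) and consider the integers lying in the interval \([7 \cdot 3^k, 8 \cdot 3^k - 1]\). Writing \((21)_3 = 7\), every such integer has the form \(7 \cdot 3^k + j\) with \(0 \le j < 3^k\), so its base-3 representation consists of the two digits \(2, 1\) followed by the \(k\)-digit base-3 expansion of \(j\) (padded with leading zeros). In particular, the factor \(21\) appears in the two most significant positions and is untouched by the choice of \(j\).

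By Corollary~\ref{c_eq_zero}(b), each of these \(3^k\) consecutive integers satisfies \(c_n = 0\), so the sequence \((c_n)_{n \in \naturals}\) contains a run of at least \(3^k\) consecutive zeros. Letting \(k \to \infty\) yields runs of arbitrary length, which proves the claim.

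There is essentially no obstacle here: the whole point is that the factor \(21\) can be placed in the high-order digits, where it is insensitive to the variation of the low-order digits as we range over a full block of length \(3^k\). The only thing to verify is that appending arbitrary digits after the prefix \(21\) never destroys this factor, which is immediate.
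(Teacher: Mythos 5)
Your proof is correct and is essentially identical to the paper's: the paper also takes $n = 7\cdot 3^k$, observes that every integer in $[7\cdot 3^k, 8\cdot 3^k - 1]$ has base-$3$ expansion beginning with the digits $21$, and concludes via Corollary \hyperref[c_eq_part2]{\ref*{c_eq_zero}(\ref*{c_eq_part2})} that all $3^k$ of these terms vanish.
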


\begin{proof}
Let \(k \in \naturals\) and consider \(n = 7 \cdot 3^k\). The base-\(3\) expansion of \(n\) is
\begin{displaymath}
(n)_3 = 21\underbrace{00\dots 0}_k.
\end{displaymath}
Hence all the numbers between \(n\) and \(n + 3^k - 1\) have \(21\) as their leading digits. We thus have 
\begin{math} c_n = c_{n+1} = \dots = c_{n + 3^k - 1} = 0 \end{math}
by Corollary \hyperref[c_eq_part2]{\ref*{c_eq_zero}(\ref*{c_eq_part2})}.
\end{proof}

\begin{cor} \label{c_zero_freq} The frequency of \(0\)'s in the sequence 
\begin{math} (c_n)_{n \in \naturals} \end{math}
is equal to \(1\).
\end{cor}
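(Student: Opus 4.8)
The plan is to prove the equivalent statement that the terms with $c_n\neq 0$ have frequency $0$. The starting point is the second part of Corollary~\ref{c_eq_zero}: if $c_n\neq 0$, then the base-$3$ expansion of $n$ does not contain the block $21$. Hence, writing $Z(N)=\#\{\,n<N: c_n\neq 0\,\}$, it suffices to bound $Z(N)$ by the number of integers below $N$ whose ternary expansion avoids $21$, and to show that this latter count is $o(N)$.

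First I would count the $21$-avoiding words. Padding every $n\in\{0,1,\dots,3^{k}-1\}$ with leading zeros to length exactly $k$ gives a bijection with the length-$k$ ternary words, and since a leading zero can never play the role of the $2$ in an occurrence of $21$, this bijection preserves the property of avoiding $21$. Let $N_k$ denote the number of length-$k$ ternary words avoiding $21$, split as $N_k=a_k+b_k$ according to whether the last letter is $2$ or not. Appending a letter shows $a_{k+1}=a_k+b_k$ (a $2$ may always be appended) and $b_{k+1}=a_k+2b_k$ (a $0$ may be appended to any word, but a $1$ only to a word not ending in $2$). Thus $(a_k,b_k)$ evolves under the transfer matrix $M=\begin{bmatrix}1&1\\1&2\end{bmatrix}$, whose characteristic polynomial is $\lambda^2-3\lambda+1$ and whose dominant eigenvalue is $\rho=\frac{3+\sqrt{5}}{2}$.

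The key point is that $\rho=\frac{3+\sqrt{5}}{2}<3$, so $N_k=O(\rho^{k})$ and therefore $N_k/3^{k}\to 0$. Since $Z(N)\le N_k$ whenever $N\le 3^k$, choosing $k$ with $3^{k}\le N<3^{k+1}$ gives $Z(N)/N\le N_{k+1}/3^{k}=3\,(N_{k+1}/3^{k+1})\to 0$ as $N\to\infty$, using the monotonicity of $Z$. Hence the frequency of nonzero terms is $0$, and the frequency of $0$'s equals $1$. The only quantitative obstacle is the eigenvalue estimate, i.e.\ confirming that the growth rate of $21$-avoiding words is strictly below the trivial rate $3$; everything else is the bijection and a routine squeeze.
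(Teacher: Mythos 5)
Your argument is correct, and it rests on the same key lemma as the paper's proof, namely Corollary \hyperref[c_eq_part2]{\ref*{c_eq_zero}(\ref*{c_eq_part2})}: every index \(n\) with \(c_n \neq 0\) has a base-\(3\) expansion avoiding the factor \(21\), so it suffices to show that such integers have density zero. Where you diverge is in the counting step. The paper takes a cruder but entirely elementary route: it writes each \(k < 9^m\) as a ternary word of length \(2m\), cuts it into \(m\) non-overlapping pairs of digits, observes that each pair must differ from \(21\), and concludes there are at most \(8^m\) admissible values, with \(8^m/9^{m-1} \to 0\). This deliberately ignores occurrences of \(21\) that straddle two pairs, since the weaker bound already suffices. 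You instead compute the exact growth rate of \(21\)-avoiding ternary words via the transfer matrix \(\left[\begin{smallmatrix}1&1\\1&2\end{smallmatrix}\right]\), obtaining the dominant eigenvalue \(\rho = \frac{3+\sqrt{5}}{2} < 3\); your recurrences \(a_{k+1}=a_k+b_k\), \(b_{k+1}=a_k+2b_k\) are right, the padding-by-leading-zeros bijection does preserve \(21\)-avoidance, and the final squeeze \(Z(N)/N \le N_{k+1}/3^k \to 0\) is sound. Your bound is sharper (\(\rho \approx 2.618\) per digit versus the paper's effective \(2\sqrt{2} \approx 2.828\)), at the cost of an eigenvalue computation; both are more than enough to force the density to zero, so the extra precision buys nothing for this particular corollary, but your method would generalize more readily if one wanted the actual asymptotics of the number of nonzero terms.
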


\begin{proof}
Let \(n \in \naturals\) be nonzero. Then there exists \(m \in \naturals\) such that \begin{math} 9^{m-1} \leq n < 9^m \end{math}. We thus have that
\begin{displaymath}
0 \leq \frac{|\{k < n : c_k \neq 0\}|}{n} \leq \frac{|\{k < 9^m : c_k \neq 0\}|}{9^{m-1}}.
\end{displaymath}
The base-\(3\) representation of \(k < 9^m\) can be written as a word of length \(2m\), which can be divided into \(m\) pairs of letters. From Corollary \hyperref[c_eq_part2]{\ref*{c_eq_zero}(\ref*{c_eq_part2})}, in order for \(c_k\) to be nonzero, we need all these pairs to be different from \(21\). We have \(8\) possibilities for each pair. Hence we have at most \(8^m\) different values of \(k < 9^m\) such that \(c_k \neq 0\). We therefore have the equality
\begin{displaymath}
\frac{|\{k < 9^m : c_k \neq 0\}|}{9^{m-1}} \leq \frac{8^m}{9^{m-1}},
\end{displaymath}
which completes the proof, because 
\begin{math} \lim\limits_{m\to\infty}\frac{8^m}{9^{m-1}} = 0 \end{math}.
\end{proof}

In the next result, we are going to discuss the maximal number of consecutive \(1\)'s and \(2\)'s in the sequence
\begin{math} (c_n)_{n \in \naturals} \end{math}.
Unfortunately, it is not as easy as the case of consecutive \(0\)'s and it requires some advanced computations. First, we use \texttt{Walnut} to prove the following lemma.

\begin{samepage}
\begin{lem} \label{walnut1} The sequence
\begin{math} (c_n)_{n \in \naturals} \end{math}
has the following properties:
\begin{enumerate}[(a)]
\parskip-4pt
\item \label{w1_part1} For any \(n \geq 1\) there exists \(k \in \{-1, 0, \dots, 6\}\) such that \(c_{9n+k} = 0\).
\item \label{w1_part2} For any \(n \geq 1\) and \(k \in \{-1, 0, 1, 2\}\), if 
\begin{math} c_{9n+k} = c_{9n+k+1} = \hdots = c_{9n+k+4} \end{math},
then \(c_{9n+k} = 0\).
\end{enumerate}
\end{lem}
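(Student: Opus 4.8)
The plan is to verify both statements with Walnut, exactly as announced just before the lemma. The enabling fact is that $(c_n)_{n\in\naturals}$ is $3$-automatic, generated by the automaton $A_3$ of Figure~\ref{c_auto}. Consequently every first-order sentence about $c$ built from addition, order, base-$3$ recognizable predicates and the values $c_n$ is decidable (this is the standard logical framework for automatic sequences underlying \cite[Theorem~12.2.5]{AS}), and Walnut discharges such sentences by explicit automaton constructions. So the ``proof'' consists of translating (a) and (b) faithfully into first-order form and running the decision procedure.

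First I would load $A_3$ into Walnut as the word automaton computing $c$, noting that multiplication by the constant $9$ and the small additive offsets are all base-$3$ recognizable. For part~(a), I would write the sentence asserting that for every $n\ge 1$ at least one of $c_{9n-1},c_{9n},\dots,c_{9n+6}$ vanishes. The only delicate point is the index $9n-1$: since subtraction is not primitive, I would introduce an auxiliary variable $m$ with $m+1=9n$ (legitimate because $n\ge1$ forces $9n\ge9$) and use $c_m=0$ as one disjunct. The remaining seven disjuncts $c_{9n+k}=0$, $k\in\{0,\dots,6\}$, are written directly. The whole statement is a single universally quantified sentence in $n$, which Walnut returns as TRUE.

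For part~(b), since $k$ ranges only over the four values $\{-1,0,1,2\}$, I would not quantify over $k$ but instead form the finite conjunction over these cases. For each fixed $k$, both the hypothesis $c_{9n+k}=c_{9n+k+1}=\dots=c_{9n+k+4}$ and the conclusion $c_{9n+k}=0$ are first-order in $n$ (again handling $9n-1$ via the $m+1=9n$ trick when $k=-1$), so the full statement is a finite conjunction of $\forall n$ sentences, each confirmed by Walnut.

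The main obstacle is not mathematical but translational: one must encode the offsets $9n+k$ — and especially the negative offset $9n-1$ — correctly, and make sure the loaded automaton really computes $c$ (that the digit-reading direction and the output labelling of Table~\ref{c_out} are matched). Once the sentences are stated correctly, the decision procedure handles them with no by-hand case analysis on the $28$ states of $A_3$. I note in passing that a direct argument on the automaton is awkward precisely because the window $\{9n-1,\dots,9n+6\}$ straddles two base-$9$ blocks (the term $9n-1=9(n-1)+8$ belongs to the predecessor block), and this mixed-block reasoning is exactly what Walnut absorbs transparently.
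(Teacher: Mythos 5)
Your proposal matches the paper exactly: the paper offers no written proof of this lemma beyond the remark that it is established with \texttt{Walnut}, which is precisely the route you take. Your additional care with the encoding (the $m+1=9n$ device for the offset $9n-1$, the finite case split over $k$, and checking the digit-reading direction against Table~\ref{c_out}) is a faithful and correctly handled elaboration of what the paper leaves implicit.
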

\end{samepage}

We are now ready to prove the following theorem.

\begin{thm} \label{c_consec_12} The maximal number of consecutive \(1\)'s and the maximal number of consecutive \(2\)'s in the sequence
\begin{math} (c_n)_{n \in \naturals} \end{math}
is equal to \(4\). Moreover, the sets
\begin{align*}
\mathcal{C}_1 &= \{n \in \naturals : c_n = c_{n+1} = c_{n+2} = c_{n+3} = 1\}, \\
\mathcal{C}_2 &= \{n \in \naturals : c_n = c_{n+1} = c_{n+2} = c_{n+3} = 2\}
\end{align*} 
are infinite.
\end{thm}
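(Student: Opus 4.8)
The plan is to prove the two halves separately: first that no five consecutive terms of \((c_n)\) can share a common nonzero value (so that runs of \(1\)'s and of \(2\)'s have length at most \(4\)), and then that runs of length exactly \(4\) actually occur, in fact infinitely often.

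For the upper bound I would suppose, toward a contradiction, that \(c_m = c_{m+1} = \dots = c_{m+4} = a\) for some \(a \in \{1,2\}\), and split into cases according to the residue \(r\) of \(m\) modulo \(9\). If \(r \in \{3,4,5,6,7\}\), then exactly one of the five indices \(m,\dots,m+4\) is congruent to \(7\) modulo \(9\), and Corollary~\ref{c_eq_zero}(a) forces that term to vanish, contradicting \(a \neq 0\). If \(r \in \{0,1,2\}\), I would write \(m = 9n + r\) with \(n \geq 1\) and apply Lemma~\ref{walnut1}(b) with \(k = r\) to conclude \(c_m = 0\); if \(r = 8\), I would write \(m = 9(n+1) - 1\) and apply the same lemma with \(k = -1\). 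The only indices escaping this dichotomy are \(m \in \{0,1,2\}\) (where the lemma's hypothesis \(n \geq 1\) fails), which I would dispose of directly using \(c_0 = 0\) and \(c_5 = 2\). This rules out five equal nonzero terms and hence bounds both run lengths by \(4\).

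For the lower bound and infinitude I would first exhibit explicit witnesses. A direct computation from Proposition~\ref{s_recurrence}, via the relation \(c_n = -s_{n+1}^{(3)}\) for \(n \geq 2\), gives \(c_1 = c_2 = c_3 = c_4 = 1\) and \(c_5 = 2\), so \(1 \in \mathcal{C}_1\) and the bound \(4\) is attained for \(1\)'s; an analogous search produces a witness for \(\mathcal{C}_2\). To upgrade a single occurrence into infinitely many I would exploit the self-similar structure of the automaton \(A_3\): reading the four consecutive integers \(n, n+1, n+2, n+3\) amounts to tracking how carries propagate through their base-\(3\) expansions, so \(\mathcal{C}_1\) and \(\mathcal{C}_2\) are themselves \(3\)-automatic. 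The idea is to locate in \(A_3\) a configuration from which appending a suitable block of digits both preserves the output pattern \((1,1,1,1)\) (resp.\ \((2,2,2,2)\)) on the four shifted indices and strictly increases the index; iterating that block then produces an unbounded family. This is exactly where the auxiliary sequences \((g_n)\) and \((h_n)\), the increasing enumerations of \(\mathcal{C}_1\) and \(\mathcal{C}_2\), enter: I would determine closed forms for \(g_n\) and \(h_n\) with the aid of a short computer search and then verify, through \(A_3\) or the recurrence, that each \(g_n\) (resp.\ \(h_n\)) genuinely begins a run of four \(1\)'s (resp.\ four \(2\)'s).

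The routine part is the upper bound, which reduces to the finite case analysis above once Corollary~\ref{c_eq_zero} and Lemma~\ref{walnut1} are in hand. The main obstacle is the infinitude: a single witness does not suffice, and one must exhibit an unbounded family. The delicate point is controlling the four simultaneous values \(c_n,\dots,c_{n+3}\) across a carry in the base-\(3\) expansion, which is precisely what makes the explicit description of \((g_n)\) and \((h_n)\), and the verification that their formulas never fail, the crux of the argument.
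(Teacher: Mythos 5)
Your proposal is correct and follows essentially the same route as the paper: the upper bound comes from Corollary \ref{c_eq_zero} combined with Lemma \ref{walnut1}(\ref{w1_part2}) (your residue-mod-9 case split just makes explicit what the paper states in one line), and the infinitude comes from explicit parametric families of witnesses, found by computer search and verified by running the automaton \(A_3\) on their base-\(3\) expansions. The only slip is describing \((g_n)\) and \((h_n)\) as the increasing enumerations of \(\mathcal{C}_1\) and \(\mathcal{C}_2\) --- in the paper they are merely the specific families \(g_n = 4\cdot 3^{n+2}+1\) and \(h_n = 166\cdot 3^{n+2}+1\) (note that \(1 \in \mathcal{C}_1\) while \(g_0 = 37\)) --- but this does not affect the validity of your argument.
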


\begin{proof}
From Corollary \hyperref[c_eq_part2]{\ref*{c_eq_zero}(\ref*{c_eq_part2})} and Lemma \hyperref[w1_part2]{\ref*{walnut1}(\ref*{w1_part2})} we get that we cannot have \(5\) consecutive \(1\)'s or \(5\) consecutive \(2\)'s in the sequence
\begin{math} (c_n)_{n \in \naturals} \end{math}.
Hence, in order to complete the proof, we need to prove that the sets \(\mathcal{C}_1\) and \(\mathcal{C}_2\) are infinite.

We start with the set \(\mathcal{C}_1\). We consider a sequence
\begin{math} (g_n)_{n \in \naturals} \end{math}
given by
\begin{displaymath}
g_n = 4 \cdot 3^{n+2} + 1, \quad n \in \naturals.
\end{displaymath}
We have the equality
\begin{displaymath}
(g_n)_3 = 11\underbrace{00\dots 0}_{n+1}1.
\end{displaymath}
Hence, using the automaton \(A_3\), we obtain the formul\ae
\begin{align*}
\delta([c_n], (g_n)_3) = \delta([c_n], 11\underbrace{00\dots 0}_{n+1}1) &= \left\{\begin{array}{ll}
{[c_{9n+3}]} & \text{if } n \equiv 0, 1 \!\!\pmod 3, \\ 
{[c_{3n+1}]} & \text{if } n \equiv 2 \!\!\pmod 3,
\end{array}\right. \\
\delta([c_n], (g_n + 1)_3) = \delta([c_n], 11\underbrace{00\dots 0}_{n+1}2) &= \left\{\begin{array}{ll}
{[c_{9n+2}]} & \text{if } n \equiv 0 \!\!\pmod 3, \\
{[c_{3n+1}]} & \text{if } n \equiv 1, 2 \!\!\pmod 3, 
\end{array}\right. \\
\delta([c_n], (g_n + 2)_3) = \delta([c_n], 11\underbrace{00\dots 0}_{n}10) &= \left\{\begin{array}{ll}
{[c_{9n+3}]} & \text{if } n \equiv 0 \!\!\pmod 3, \\
{[c_{9n+2}]} & \text{if } n \equiv 1, 2 \!\!\pmod 3,
\end{array}\right. \\
\delta([c_n], (g_n + 3)_3) = \delta([c_n], 11\underbrace{00\dots 0}_{n}11) &= \left\{\begin{array}{ll}
{[c_{3n+1}]} & \text{if } n \equiv 0 \!\!\pmod 3, \\
{[c_{9n+3}]} & \text{if } n \equiv 1, 2 \!\!\pmod 3.
\end{array}\right.
\end{align*}
We therefore have 
\begin{math} \tau(\delta([c_n], (g_n + i)_3) = 1 \end{math}
for all \(n \in \naturals\) and \(i \in \{0, 1, 2, 3\}\), which means that \(g_n \in \mathcal{C}_1\) for all \(n \in \naturals\) and therefore \(\mathcal{C}_1\) is infinite. To prove that the set \(\mathcal{C}_2\) is also infinite, we use the same method as above, but instead of the sequence
\begin{math} (g_n)_{n \in \naturals} \end{math},
we consider a sequence 
\begin{math} (h_n)_{n \in \naturals} \end{math}
given by
\begin{displaymath}
h_n = 166 \cdot 3^{n+2} + 1
\end{displaymath}
for \(n \in \naturals\).
\end{proof}

Instead of considering the sequences of consecutive \(1\)'s and \(2\)'s in the sequence
\begin{math} (c_n)_{n \in \naturals} \end{math},
we can also consider the sequences of consecutive nonzero terms.

\begin{thm} The maximal number of consecutive nonzero terms in the sequence
\begin{math} (c_n)_{n \in \naturals} \end{math} 
is equal to \(7\). Moreover, the set
\begin{displaymath}
\mathcal{C}_3 = \{n \in \naturals : c_{n+i} \neq 0, \, i \in \{0, 1, \dots, 6\}\}
\end{displaymath}
is infinite.
\end{thm}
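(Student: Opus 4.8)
The plan is to prove the two halves of the statement separately: first that no eight consecutive terms of \((c_n)_{n\in\naturals}\) can all be nonzero, and then that blocks of seven consecutive nonzero terms occur infinitely often. Combining the bound \(\le 7\) with the existence of infinitely many runs of length exactly \(7\) then yields both the claimed maximum and the infinitude of \(\mathcal{C}_3\).

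For the upper bound I would show that every window of eight consecutive integers already contains an index \(k\) with \(c_k = 0\), using only Corollary~\ref{c_eq_zero} and Lemma~\ref{walnut1}(a). Fix \(m \ge 0\) and consider \(W = \{m, m+1, \dots, m+7\}\). Since \(|W| = 8 < 9\), the eight elements occupy eight distinct residues modulo \(9\), and the unique residue omitted is \(m+8 \equiv m-1 \pmod 9\). If this omitted residue is not \(7\), then \(W\) contains some index \(\equiv 7 \pmod 9\), and that index is a zero by Corollary~\ref{c_eq_zero}(a). If the omitted residue is \(7\), then \(m \equiv 8 \pmod 9\); writing \(m = 9n + 8\) we get \(W = \{9(n+1) - 1, 9(n+1), \dots, 9(n+1)+6\}\), and Lemma~\ref{walnut1}(a) applied at the index \(n+1 \ge 1\) produces a zero inside \(W\). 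Either way \(W\) is not zero-free, so no run of eight nonzero terms exists and the maximal run has length at most \(7\). Both sub-cases cover the small windows as well, since the witnessing zero can be taken as small as \(c_7 = 0\), respectively the lemma at \(n+1 = 1\).

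For the lower bound I would first observe that a run of seven nonzero terms cannot contain an index \(\equiv 7 \pmod 9\), so such a run must begin at a position congruent to \(0\) or \(8\) modulo \(9\); concretely I would hunt for runs of the form \(c_{9m}, c_{9m+1}, \dots, c_{9m+6}\). Following the method used in the proof of Theorem~\ref{c_consec_12}, I would exhibit an explicit family \(m = m_k\) given as an arithmetic progression in powers of \(3\), chosen so that the base-\(3\) expansion of \(m_k\) has a fixed prefix, a central block of \(k\) zeros, and a fixed suffix; appending the two low digits encoding each \(j \in \{0,\dots,6\}\) then produces the base-\(3\) expansions of \(9m_k, \dots, 9m_k + 6\). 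Reading these words least-significant-digit-first through the automaton \(A_3\), the central block of zeros forces a short cycle, so the terminal state depends only on \(k\) modulo the cycle length (expected to be \(3\), exactly as in Theorem~\ref{c_consec_12}). I would then tabulate, for each residue class of \(k\) and each \(j \in \{0,\dots,6\}\), the state \(\delta([c_n], (9m_k + j)_3)\) and check against Table~\ref{c_out} that its output \(\tau\) is nonzero, which shows \(m_k \in \mathcal{C}_3\) for all \(k\) and hence that \(\mathcal{C}_3\) is infinite.

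The routine part is the upper bound, which is a clean counting argument modulo \(9\). The main obstacle is the lower bound: pinning down an explicit family \(m_k\) whose seven neighboring words all evaluate to nonzero outputs, and then carrying out the resulting automaton traces across the case split on \(k\). As in Theorem~\ref{c_consec_12}, locating the correct prefix/suffix pattern is the delicate step --- precisely where one expects to rely on the auxiliary computer searches mentioned in the introduction --- whereas once the family is fixed, verifying the transitions is bookkeeping against the transition graph of \(A_3\) together with the output Table~\ref{c_out}.
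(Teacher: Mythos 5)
Your proposal is correct and follows essentially the same route as the paper: the upper bound comes from Lemma~\ref{walnut1}(\ref{w1_part1}) together with Corollary~\ref{c_eq_zero} (your mod-\(9\) case split just makes explicit what the paper leaves implicit), and the lower bound is obtained by tracing an explicit family through \(A_3\) exactly as in Theorem~\ref{c_consec_12}. The paper's concrete witness is \(4\cdot 3^{n+2}\in\mathcal{C}_3\), i.e.\ \(m_k = 4\cdot 3^{k}\) with \((m_k)_3 = 11\,0\cdots 0\), which is precisely the prefix-plus-zero-block shape your search procedure targets.
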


\begin{proof}
From Lemma \hyperref[w1_part1]{\ref*{walnut1}(\ref*{w1_part1})} we get that we cannot have more than \(7\) consecutive nonzero terms. Moreover, using the same method as before we can verify that
\begin{math} 4\cdot 3^{n+2} \in \mathcal{C}_3 \end{math}
for all \(n \in \naturals\).
\end{proof}

\begin{rem}
It is instructive to compare the sequence 
\begin{math} (c_n)_{n \in \naturals} \end{math}
with the original sequence 
\begin{math} (t_n^{(3)})_{n \in \naturals} \end{math}.
Recall that we have
\begin{displaymath}
t_0^{(3)} = 1, \quad t_{3n+i}^{(3)} = t_n^{(3)} + i, \quad i \in \{0, 1, 2\}.
\end{displaymath}
The sequence
\begin{math} (t_n^{(3)})_{n \in \naturals} \end{math}
has the following properties:
\begin{itemize}
\item The maximal numbers of consecutive \(0\)'s, consecutive \(1\)'s and consecutive \(2\)'s are all equal to \(2\).
\item The maximal number of consecutive nonzero terms is equal to \(4\).
\item The frequencies of \(0\)'s, \(1\)'s and \(2\)'s are all equal to \(\frac{1}{3}\).
\item The automaton generating this sequence is not synchronizing.
\end{itemize}
\end{rem}

We can also find analogous properties for the sequence
\begin{math} (t_n^{(p)})_{n \in \naturals} \end{math}
for all primes \(p\).

\subsection{\texorpdfstring{The case \(p = 5\)}{The case p=5}}
\label{section_thue5}

In this section we are going to study the case \(p = 5\) (sequence A053840 in \cite{OE}). From now on, to simplify the notation, we write
\begin{math} (d_n)_{n \in \naturals} \end{math}
instead of 
\begin{math} (c_n^{(5)})_{n \in \naturals} \end{math}.

We start with computing the automaton generating the sequence 
\begin{math} (d_n)_{n \in \naturals} \end{math}. 
From equation \eqref{Cp_equation}, we have that the formal power series 
\begin{math} C_5 = \sum_{n=0}^{\infty}d_nX^n \end{math}
satisfies the following algebraic relation:
\begin{equation} \label{C5_equation}
(1 - C_5)^6X^5 - (1 - C_5)^2X + C_5 = 0.
\end{equation}
From this equation, we can determine the automaton
\begin{math} A_5 = (Q, \Sigma_5, \delta, [d_n], \Sigma_5, \tau) \end{math}
generating the sequence
\begin{math} (d_n)_{n \in \naturals} \end{math}.
We again perform all the computations in Mathematica. The obtained automaton has \(2236\) states, which makes it much more complicated than the previously obtained automaton \(A_3\) for the case \(p = 3\). It also means that it would be inconvenient to represent the automaton \(A_5\) as a directed graph. We therefore represent it as a table, which describes all the connections between the states as well as the output values. Each state is labeled by a different number from \(1\) (the initial state) to \(2236\). Some rows of this table are presented below.

\begin{table}[ht!] 
\centering
\begin{tabular}{|c|c|c|c|c|c|c|}
\hline
\(q\) & \(\delta(q, 0)\) & \(\delta(q, 1)\) & \(\delta(q, 2)\) & \(\delta(q, 3)\) & \(\delta(q, 4)\) & \(\tau(q)\) \\
\hline
\(1\) & \(2\) & \(3\) & \(4\) & \(5\) & \(6\) & \(0\) \\
\hline
\(2\) & \(13\) & \(14\) & \(15\) & \(16\) & \(17\) & \(0\) \\
\hline
\(3\) & \(197\) & \(198\) & \(199\) & \(12\) & \(5\) & \(1\) \\
\hline
\(4\) & \(777\) & \(97\) & \(4\) & \(5\) & \(5\) & \(3\) \\
\hline
\(5\) & \(5\) & \(5\) & \(5\) & \(5\) & \(5\) & \(0\) \\
\hline
\(\vdots\) &&&&&& \\
\hline
\(2236\) & \(2161\) & \(137\) & \(266\) & \(120\) & \(4\) & \(2\) \\
\hline
\end{tabular}
\caption{The representation of the automaton \(A_5\).} \label{d_auto_table}
\end{table}

In the following, we use the above table to determine the properties of the sequence
\begin{math} (d_n)_{n \in \naturals} \end{math}.
We start with finding all the subautomata of the automaton \(A_5\). We coded a simple computer program to analyze all the connections. We found out that the automaton \(A_5\) has the following, \(4\)-level structure: 
\begin{enumerate}
\item The initial state \([d_n]\).
\item Two disjoint \(5\)-cycles: 
\begin{math} \{[d_{5^kn}] : 1 \leq k \leq 5\} \end{math}
and
\begin{math} \{[d_{5^kn+5^k-1}] : 1 \leq k \leq 5\} \end{math}.
\item The subset \(Q' \subset Q\) of \(2224\) states such that for any \(q_1, q_2 \in Q'\) there exists a word \(w \in \Sigma_5^\star\) such that \(\delta(q_1, w) = q_2\).
\item The terminal state \([d_{5n+3}]\) (for any \(a \in \Sigma_5\) we have 
\begin{math} \delta([d_{5n+3}], a) = [d_{5n+3}]) \end{math}
This state is represented by number \(5\) in Table \ref{d_auto_table}. 
\end{enumerate}
It is possible to go from any level of this structure to any lower level (considering the initial state as the highest level), but we cannot go from lower levels to higher ones. This structure is illustrated in Figure \ref{d_auto}. The output values are shown in Table \ref{d_out}.

\begin{figure}[ht!]
\begin{center}
\begin{tikzpicture}[->, shorten >= 1pt, node distance=2.5 cm, on grid, auto]
	\node[elliptic state, inner sep=2pt, line width = 1.5pt] (A) {\(d_n\)};
	\node[elliptic state, inner sep=2pt] (B1) [below=2 cm of A] {\(d_{5n}\)};
	\node[elliptic state, inner sep=2pt] (B2) [below=1.25cm of B1] {\(d_{25n}\)};
	\node[elliptic state, inner sep=2pt] (B3) [below=1.25cm of B2] {\(d_{125n}\)};
	\node[elliptic state, inner sep=2pt] (B4) [below=1.25cm of B3] {\(d_{625n}\)};
	\node[elliptic state, inner sep=2pt] (B5) [below=1.25cm of B4] {\(d_{3125n}\)};
	\node[elliptic state, inner sep=2pt] (C1) [above=4cm of B1] {\(d_{5n+4}\)};
	\node[elliptic state, inner sep=2pt] (C2) [above=1.25cm of C1] {\(d_{25n+24}\)};
	\node[elliptic state, inner sep=2pt] (C3) [above=1.25cm of C2] {\(d_{125n+124}\)};
	\node[elliptic state, inner sep=2pt] (C4) [above=1.25cm of C3] {\(d_{625n+624}\)};
	\node[elliptic state, inner sep=2pt] (C5) [above=1.25cm of C4] {\(d_{3125n+3124}\)};
	\node[elliptic state, inner sep=2pt] (D)
[left=5cm of A] {\(d_{5n+3}\)};
	\node[elliptic state, inner sep=12pt] (E)
[right=4cm of A] {\(Q'\)};
	\path[->]
	(A) edge node {0} (B1)
		edge node [swap] {4} (C1)
		edge node {1,2} (E)
		edge node [swap] {3} (D)
	(B1) edge node {0} (B2)
		 edge [bend right=30] node {} (E)
	(B2) edge node {0} (B3)
		 edge [bend right=30] node {} (E)
	(B3) edge node {0} (B4)
		 edge [bend right=30] node {} (E)
	(B4) edge node {0} (B5)
		 edge [bend right=30] node [swap] {1,2,3,4} (E)
	(B5) edge [bend left=50] node {0} (B1)
		 edge [bend right=70] node [swap] {1,2} (E)
		 edge [bend left=30] node {3,4} (D)
	(C1) edge node {4} (C2)
		 edge [bend left=30] node {} (E)
		 edge [bend right=60] node {} (D)
	(C2) edge node {4} (C3)
		 edge [bend left=30] node {} (E)
		 edge [bend right=60] node {} (D)
	(C3) edge node {4} (C4)
		 edge [bend left=30] node {} (E)
		 edge [bend right=60] node {} (D)
	(C4) edge node {4} (C5)
		 edge [bend left=30] node {} (E)
		 edge [bend right=60] node {} (D)
	(C5) edge [bend left=60] node {4} (C1)
		 edge [bend left=30] node {0,1,2} (E)
		 edge [bend right=60] node [swap] {3} (D)
	(D) edge [loop left] node {0,1,2,3,4} (D);
	\path[dashed,->]
	(E) edge [loop right] node {} (E)
	(E) edge [bend left=10] node {} (D);
\end{tikzpicture}
\vspace{-0.2 cm}
\caption{The structure of the automaton \(A_5\). \label{d_auto}}
\end{center}
\end{figure}

\begin{table}[ht!] 
\centering
\begin{tabular}{|c|c||c|c||c|c|}
\hline
\(q\) & \(\tau(q)\) & \(q\) & \(\tau(q)\) & \(q\) & \(\tau(q)\) \\
\hline
\([d_{n}]\) & \(0\) & \([d_{625n}]\) & \(0\) & \([d_{25n+24}]\) & \(2\) \\
\hline
\([d_{5n}]\) & \(0\) & \([d_{3125n}]\) & \(0\) & \([d_{125n+124}]\) & \(3\) \\
\hline
\([d_{25n}]\) & \(0\) & \([d_{5n+3}]\) & \(0\) & \([d_{625n+624}]\) & \(4\) \\
\hline
\([d_{125n}]\) & \(0\) & \([d_{5n+4}]\) & \(1\) & \([d_{3125n+3124}]\) & \(0\) \\
\hline
\end{tabular}
\caption{Some output values of the automaton \(A_5\). \label{d_out}}
\end{table}

The automaton \(A_5\) has the following property.

\begin{prop} The automaton \(A_5\) is synchronizing and the shortest synchronizing words are \(14\), \(24\), \(33\), \(34\) and \(43\). Moreover, for all \(q \in Q\) and \begin{math} w \in \{14, 24, 33, 34, 43\} \end{math}
we have 
\begin{math} \delta(q, w) = [d_{5n+3}] \end{math}.
\end{prop}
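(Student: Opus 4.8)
The plan is to lean on the fact that the terminal state $[d_{5n+3}]$ (state $5$ of Table~\ref{d_auto_table}) is an absorbing sink, i.e.\ $\delta([d_{5n+3}],a)=[d_{5n+3}]$ for every $a\in\Sigma_5$. From this, one observation does most of the conceptual work: if $w$ is \emph{any} synchronizing word, with constant image $\delta(q,w)=c$ for all $q\in Q$, then taking $q=[d_{5n+3}]$ forces $c=\delta([d_{5n+3}],w)=[d_{5n+3}]$. Thus every synchronizing word necessarily resets $A_5$ to $[d_{5n+3}]$, which simultaneously settles the ``moreover'' clause and shows that the sink is the only possible reset state. It then remains to determine which words reset $Q$ to the singleton $\{[d_{5n+3}]\}=\{5\}$ and to see that the shortest such words are exactly the five listed.

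For this I would work with forward image sets. Writing $\delta(S,a)=\{\delta(q,a):q\in S\}$ for $S\subseteq Q$, a word $ab$ of length two is synchronizing precisely when $\delta(\delta(Q,a),b)=\{5\}$. So I would first compute the five one-letter images $I_a:=\delta(Q,a)$ and then, for the second letter, apply $\delta(\cdot,b)$ to the set $I_a$ rather than to all of $Q$. The positive half of the proposition is then the five equalities $\delta(I_1,4)=\delta(I_2,4)=\delta(I_3,3)=\delta(I_3,4)=\delta(I_4,3)=\{5\}$, each of which certifies that the corresponding word ($14$, $24$, $33$, $34$, $43$) drives the entire state set into the sink in two steps.

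Minimality consists of two finite checks. No letter is synchronizing on its own: since the sink maps to itself under every letter, it suffices to exhibit, for each $a\in\Sigma_5$, one further state whose $a$-image is not $5$, and these can be read straight off Table~\ref{d_auto_table}, e.g.\ $\delta([d_n],0)=2$, $\delta([d_n],1)=3$, $\delta([d_n],2)=4$, $\delta([d_{5n}],3)=16$ and $\delta([d_n],4)=6$, none equal to $5$. For length two, the reformulation turns the question into the single sweep ``for which of the $25$ pairs $(a,b)$ is $\delta(I_a,b)=\{5\}$?''; producing for each rejected pair a witness $r\in I_a$ with $\delta(r,b)\neq 5$ isolates exactly the five words. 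Here one family can even be ruled out structurally: no word beginning with $0$ can synchronize, because $I_0=\delta(Q,0)$ contains the cyclic state $[d_{25n}]$, which by Figure~\ref{d_auto} is mapped by every letter either within the $5$-cycle $\{[d_{5^kn}]\}$ or into $Q'$, hence never to the sink, so $\delta(I_0,b)\neq\{5\}$ for all $b$.

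The genuine work sits in the strongly connected component $Q'$ of $2224$ states (level~(3) of the decomposition in Figure~\ref{d_auto}). The initial state and the two $5$-cycles are easy to follow through the explicit edges of the figure, but each equality $\delta(I_a,b)=\{5\}$ ultimately asserts that every state of $Q'$ lying in $I_a$ is sent to the sink by the second letter, and I see no way to certify this without the transition table. I therefore expect the main obstacle to be computational rather than conceptual, and would carry out the five image computations and the $25$-pair sweep with the same auxiliary program used to extract the structure of $A_5$ in this subsection, recording only the image sizes $|I_a|$ and the handful of critical transitions needed to certify the five words and to refute the remaining twenty.
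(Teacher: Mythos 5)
The paper states this proposition without proof---it is checked by computer on the $2236$-state transition table---and your proposal is a correct organization of exactly that verification, so the two approaches essentially coincide; your observation that the absorbing sink forces every synchronizing word to reset to $[d_{5n+3}]$ is a clean way to dispatch the ``moreover'' clause, and the reduction to the five image sets $\delta(Q,a)$ followed by a sweep over second letters is a sensible way to certify both the positive and the negative halves. One small imprecision: ``no word beginning with $0$ can synchronize'' is false as a blanket statement (for instance $014$ synchronizes, since $14$ does and the sink is absorbing); what your argument actually establishes, and all that you need, is that no length-two word $0b$ synchronizes, because $[d_{25n}]\in\delta(Q,0)$ and no single letter sends $[d_{25n}]$ to the sink.
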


As an immediate consequence of the above proposition, we obtain the following result, which is analogous to results described in Corollary \ref{c_zero_seq} and Corollary \ref{c_zero_freq}.

\begin{thm} The sequence 
\begin{math} (d_n)_{n \in \naturals} \end{math}
contains arbitrarily long sequences of consecutive \(0\)'s and the frequency of \(0\)'s in this sequence is equal to \(1\). 
\end{thm}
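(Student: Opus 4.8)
The plan is to mimic the proofs of Corollary \ref{c_zero_seq} and Corollary \ref{c_zero_freq}, exploiting the two facts recorded just above: the common target state $[d_{5n+3}]$ of all five synchronizing words is absorbing (for every $a \in \Sigma_5$ we have $\delta([d_{5n+3}], a) = [d_{5n+3}]$) and has output $\tau([d_{5n+3}]) = 0$ (Table \ref{d_out}). Since $A_5$ reads the base-$5$ expansion of $n$ starting from the least significant digit, the consequence I would isolate first is: if at any moment during this reading the automaton has just processed one of the words $14, 24, 33, 34, 43$, then it is trapped in $[d_{5n+3}]$ for the rest of the computation and therefore $d_n = 0$.

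First I would produce arbitrarily long runs of zeros. Fix $k$ and set $n = 23 \cdot 5^k$, whose base-$5$ expansion is $43\underbrace{0\cdots0}_{k}$. For every $j$ with $0 \le j < 5^k$ the number $n+j$ has expansion $43\,b_{k-1}\cdots b_0$, so the input word read by $A_5$ (least significant digit first) is $b_0\cdots b_{k-1}\,3\,4$ and its last two symbols are the synchronizing word $34$. By the above proposition this sends the automaton, from whatever state it has reached, into $[d_{5n+3}]$, whence $d_{n+j} = 0$. Thus $d_n = d_{n+1} = \cdots = d_{n+5^k-1} = 0$ is a block of $5^k$ consecutive zeros, and since $k$ is arbitrary the runs can be made as long as we like.

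Next I would bound the frequency of nonzero terms, following Corollary \ref{c_zero_freq}. The input word processed by $A_5$ for an integer $k$ is its base-$5$ representation read from the least significant digit, and a necessary condition for $d_k \neq 0$ is that this word contain none of the five synchronizing words $14, 24, 33, 34, 43$ as a factor. Given a nonzero $n$, choose $m$ with $5^{2m-2} \le n < 5^{2m}$, so that
\begin{displaymath}
0 \le \frac{|\{k < n : d_k \neq 0\}|}{n} \le \frac{|\{k < 5^{2m} : d_k \neq 0\}|}{5^{2m-2}}.
\end{displaymath}
Each $k < 5^{2m}$ gives an input word of length $2m$, which I split into $m$ aligned blocks of two symbols; for $d_k \neq 0$ every block must avoid all five synchronizing words, leaving at most $25 - 5 = 20$ possibilities per block. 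Hence $|\{k < 5^{2m} : d_k \neq 0\}| \le 20^m$, and the right-hand side above is at most $20^m/5^{2m-2} = 25 \cdot (4/5)^m$, which tends to $0$. Therefore nonzero terms have frequency $0$, so the frequency of $0$'s equals $1$.

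Both parts are essentially routine once the synchronizing proposition is granted, so I do not expect a serious obstacle. The only points needing care are getting the reading orientation right — because $A_5$ processes digits least-significant-first, it is a synchronizing word sitting among the \emph{high-order} digits that freezes the output across a whole block of consecutive $n$ — and verifying that exactly five of the twenty-five two-symbol patterns are excluded, which is precisely what yields the decaying geometric bound $(4/5)^m$.
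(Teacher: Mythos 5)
Your proposal is correct and is exactly the argument the paper intends: the theorem is stated as an immediate consequence of the synchronization proposition for \(A_5\), proved by transplanting the proofs of Corollary \ref{c_zero_seq} (a synchronizing word among the high-order digits freezes a whole block of \(5^k\) consecutive indices into the absorbing zero-output state \([d_{5n+3}]\)) and Corollary \ref{c_zero_freq} (splitting length-\(2m\) input words into \(m\) aligned pairs, each avoiding the five synchronizing words, giving the bound \(20^m/5^{2m-2}\to 0\)). Both your choice \(n=23\cdot 5^k\) with \((23\cdot 5^k)_5=43\,0\cdots 0\) and your handling of the least-significant-digit-first reading order are sound.
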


As before, we can also consider the sequences of consecutive nonzero terms.

\begin{thm} The maximal numbers of consecutive \(1\)'s, \(2\)'s, \(3\)'s and \(4\)'s in the sequence 
\begin{math} (d_n)_{n \in \naturals} \end{math}
are all equal to \(4\). Moreover, the sets
\begin{displaymath}
\mathcal{D}_i = \{n \in \naturals : d_n = d_{n+1} = d_{n+2} = d_{n+3} = i\}, \quad i \in \{1,2,3,4\}
\end{displaymath}
are infinite.
\end{thm}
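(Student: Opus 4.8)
The plan is to reuse the two-part template of Theorem~\ref{c_consec_12}: first bound the runs from above by excluding five consecutive equal nonzero terms, then prove each $\mathcal{D}_i$ infinite by producing an explicit parametrized family of witnesses. The one structural difference is that $A_5$ has $2236$ states, so the hand-tracing of paths that worked for the $28$-state automaton $A_3$ is no longer feasible; I would instead rely on \texttt{Walnut} and on computer-assisted searches, exactly as the paper already does for the families $(g_n)$ and $(h_n)$.

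For the upper bound I would prove, with \texttt{Walnut}, a lemma analogous to Lemma~\ref{walnut1} stating that $(d_n)$ contains no five consecutive equal nonzero terms. The predicate ``$\exists n:\ d_n=d_{n+1}=\dots=d_{n+4}\neq 0$'' is first-order over the $5$-automatic sequence $(d_n)$, hence decidable, and one checks that it fails. The structural reason is recorded in the preceding proposition: each of the words $14,24,33,34,43$ sends every state to the zero-output sink $[d_{5n+3}]$, so any sufficiently long window of consecutive integers is forced to exhibit one of these patterns in base $5$ and return a $0$. This plays the role of Corollary~\ref{c_eq_zero} together with Lemma~\ref{walnut1} in the $p=3$ argument.

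For the infinitude of the $\mathcal{D}_i$ I would first isolate an alignment constraint. Since $[d_{5n+3}]$ is an absorbing zero state, $d_m=0$ whenever $m\equiv 3\pmod 5$; four consecutive integers $n,n+1,n+2,n+3$ cover four distinct residues and omit only $n-1\bmod 5$, so avoiding the residue $3$ forces $n\equiv 4\pmod 5$. For such $n$ the units digits are $4,0,1,2$, with a single carry passing from $n$ to $n+1$, and the three branches of $A_5$ entered on the first digit read are the $[d_{5n+4}]$ chain, the $[d_{5n}]$ chain, and the large component $Q'$. With the alignment fixed I would search, by computer, for a family $n_i(k)$ whose base-$5$ expansion is a fixed core followed by a repeated block, chosen so that the four transitions $\delta([d_n],(n_i(k)+j)_5)$ for $j\in\{0,1,2,3\}$ all land in states of output $i$; verifying the whole family then reduces to checking that reading the repeated block fixes four designated states of $A_5$.

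The main obstacle is the infinitude part, not the bound. The carry across the window means the three branches above must all be steered to the same output $i$, and doing this simultaneously for all four letters $i\in\{1,2,3,4\}$ is where the delicate search lies. A cleaner alternative that sidesteps the explicit templates is to decide infinitude directly in \texttt{Walnut}: for each $i$ the set $\mathcal{D}_i$ is automatic, and \texttt{Walnut} can test whether it is infinite. I would present the explicit families as the main argument and use the \texttt{Walnut} infiniteness test as an independent check.
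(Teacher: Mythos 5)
Your plan matches the paper's proof: the upper bound follows immediately from \(d_{5n+3}=0\) for all \(n\) (a fact you state yourself, which makes the proposed \texttt{Walnut} lemma unnecessary --- no five consecutive terms can all be nonzero), and the infinitude of each \(\mathcal{D}_i\) is established exactly as you describe, via explicit base-\(5\) families consisting of a fixed core followed by a repeated block; the paper's witnesses \((k_n^{(i)})_5\) end in \(22\dots 244\), consistent with your alignment constraint \(k\equiv 4\pmod 5\). This is essentially the same approach as the paper's.
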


\begin{proof}
From the automaton \(A_5\) we easily get that \(d_{5n+3} = 0\) for all \(n \in \naturals\), hence we cannot have more than \(4\) consecutive nonzero terms. Therefore it suffices to prove that the sets \(\mathcal{D}_i\) are infinite. We define the sequences 
\begin{math} (k_n^{(i)})_{n \in \naturals} \end{math}
for \(i \in \{1,2,3,4\}\) in the following way:
\begin{align*}
(k_n^{(1)})_5 &= 100\underbrace{22\dots 2}_{n+1}44, \\
(k_n^{(2)})_5 &= 31\underbrace{22\dots 2}_{n+1}44, \\
(k_n^{(3)})_5 &= 210\underbrace{22\dots 2}_{n+1}44, \\
(k_n^{(4)})_5 &= 3021\underbrace{22\dots 2}_{n+1}44.
\end{align*}
Then, using the same method as in the proof of Theorem \ref{c_consec_12}, we can verify that 
\begin{math} k_n^{(i)} \in \mathcal{D}_i \end{math}
for all \(n \in \naturals\) and \(i \in \{1,2,3,4\}\) and that completes the proof.
\end{proof}

\begin{rem}
Since we have \(d_{5n+3} = 0\) for all \(n \in \naturals\), then the maximal number of consecutive nonzero terms in the sequence 
\begin{math} (d_n)_{n \in \naturals} \end{math}
is also equal to \(4\). This is different from the case \(p=3\) discussed earlier, when the maximal number of consecutive nonzero terms was almost twice as big as the maximal number of consecutive \(1\)'s and \(2\)'s.
\end{rem}

\subsection{Conjectures for the general case}

In this short section, we introduce some conjectures involving the properties of the sequence 
\begin{math} (c_n^{(p)})_{n \in \naturals} \end{math}
in the general case.

We used Mathematica to compute the automata generating the sequences 
\begin{math} (c_n^{(2)})_{n \in \naturals} \end{math}, 
\begin{math} (c_n^{(3)})_{n \in \naturals} \end{math} 
and 
\begin{math} (c_n^{(5)})_{n \in \naturals} \end{math}. 
The obtained automata have \(8\), \(28\) and \(2236\) states, respectively. Unfortunately, the case \(p=7\) is already out of the scope of computational abilities of Mathematica on our computer. However, it is still possible to give some conjectures about properties of the sequence 
\begin{math} (c_n^{(p)})_{n \in \naturals} \end{math}
in the general case -- we can use the recurrence relation from Proposition \ref{s_recurrence} to compute sufficiently large number of terms of the sequence for some \(p\) and then look for patterns.

Based on these computations, we formulate the following conjectures.

\begin{con} \label{conj_1}
We have \(c_{pn+i}^{(p)} = 0\) for all prime \(p > 3\), \(n \in \naturals\) and 
\begin{math} i \in \{\frac{p+1}{2}, \frac{p+3}{2}, \dots, \frac{p+(p-4)}{2}\} \end{math}.
\end{con}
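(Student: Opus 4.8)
The plan is to translate the statement into the vanishing of a whole family of ``digit subseries'' of $(s^{(p)}_n)$ and then to exploit the arithmetic of Catalan numbers modulo $p$. Since $c^{(p)}_{pn+i}=-s^{(p)}_{pn+i+1}$ whenever $pn+i\ge 2$ (which always holds here, as $i\ge\frac{p+1}{2}\ge 3$), and since $\{\,i+1:\frac{p+1}{2}\le i\le p-2\,\}=\{\frac{p+3}{2},\dots,p-1\}$, the conjecture is equivalent to the assertion that $A_j\equiv 0$ for every $j$ in the band $M:=\{\frac{p+3}{2},\dots,p-1\}$, where
\[
A_j(Y):=\sum_{n\ge 0}s^{(p)}_{pn+j}\,Y^n,\qquad 0\le j\le p-1,\qquad S_p(X)=\sum_{j=0}^{p-1}X^jA_j(X^p).
\]
First I would split the functional equation \eqref{Sp_equation2} according to the residue of the exponent modulo $p$. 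The digit components of $W_p=S_p(X^p)-S_p-1$ are $-A_l$ for $1\le l\le p-1$ and $S_p(Y)-A_0(Y)-1$ for $l=0$; sorting the products $X^{j+l}(A_jA_l)(X^p)$ into the ``no-carry'' ($j+l=r$) and ``carry'' ($j+l=r+p$) contributions to each class $r$ gives, for $2\le r\le p-1$,
\[
A_r\,D=\sum_{l=1}^{r-1}A_{r-l}A_l+Y\sum_{l=r+1}^{p-1}A_{r+p-l}A_l,\qquad D:=S_p(Y)-2A_0(Y)-1.
\]
Call this relation $(\ast)$. Since $D(0)=-1$, the series $D$ is a unit in $\mathbb{F}_p[\![Y]\!]$, so $(\ast)$ determines each $A_r$ from the quadratic right-hand side.

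The base case ($n=0$, i.e. the constant terms $A_j(0)=s^{(p)}_j$) is where the key arithmetic enters. Because $S_p^{p+1}=O(X^{p+1})$, equation \eqref{Sp_equation} truncates to $S_p\equiv X-S_p^2\pmod{X^{p+1}}$, so $s^{(p)}_j\equiv(-1)^{j-1}C_{j-1}\pmod p$ for $1\le j\le p$, where $C_m=\frac1{m+1}\binom{2m}{m}$ is the $m$-th Catalan number. For $j\in M$ we have $\frac{p+1}{2}\le j-1\le p-2$; the base-$p$ digits of $2j-2$ are then $1$ and $2j-2-p<j-1$, so Lucas' theorem gives $p\mid\binom{2j-2}{j-1}$, while $j\not\equiv 0\pmod p$, whence $C_{j-1}\equiv 0\pmod p$. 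Thus $A_j(0)=0$ for every $j\in M$; this is the base of the induction, and already reproves $d_{5n+3}=0$ for $p=5$.

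The engine of the inductive step is the Catalan recurrence $\sum_{l=1}^{r-1}C_{l-1}C_{r-l-1}=C_{r-1}$. Solving $(\ast)$ in the low band $L:=\{0,\dots,\frac{p+1}{2}\}$ (where the no-carry sum already has all indices in $L$) shows that, to leading order, $A_l$ behaves like $C_{l-1}A_1^l/D^{l-1}$; feeding this into the right-hand side of $(\ast)$ for $r\in M$ produces a ``main term'' whose coefficient is exactly $C_{r-1}$, which vanishes modulo $p$ by the base case. Concretely, for $p=5$ one has $M=\{4\}$ and $(\ast)$ reads $A_4D=2A_1A_3+A_2^2$; substituting $A_2=A_1^2/D+\tfrac{2Y}{D}A_3A_4$ and $A_3=\tfrac{2A_1^3}{D^2}+\tfrac{4YA_1}{D^2}A_3A_4+\tfrac{Y}{D}A_4^2$ turns the right-hand side into $\tfrac{5A_1^4}{D^2}+Y\cdot(\text{terms each divisible by }A_4)$. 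The first summand is $0$ in $\mathbb{F}_5$, so $A_4D$ is divisible by $YA_4$; as $D$ is a unit this forces $\operatorname{ord}(A_4)\ge\operatorname{ord}(A_4)+1$, hence $A_4\equiv 0$. In general I would run an induction on $\nu:=\min_{r\in M}\operatorname{ord}(A_r)$: extracting the coefficient of $Y^\nu$ from $(\ast)$ for each $r\in M$ yields a linear system $(I+K)\vec a=-\vec B$ for the leading coefficients $\vec a=([Y^\nu]A_r)_{r\in M}$, in which $K$ is strictly lower triangular (each coupling links $a_r$ only to $a_l$ with $l\in M$, $l<r$), so $I+K$ is invertible, while $\vec B$ is the order-$\nu$ part of the pure low-band convolutions $\sum_{l,\,r-l\in L}A_{r-l}A_l$.

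The main obstacle is to show that this inhomogeneous vector $\vec B$ vanishes, equivalently that $\operatorname{ord}\!\big(\sum_{l,\,r-l\in L}A_{r-l}A_l\big)>\nu$ for every $r\in M$. The constant-term version of this is precisely the ``$C_{r-1}\equiv 0$'' cancellation above, and the $p=5$ computation illustrates how the surviving terms acquire an extra factor $A_4$ together with an explicit $Y$. For general $p$, however, the low-band series are coupled to one another and to the mid-range series through the carry sums, so the clean formula $A_l=C_{l-1}A_1^l/D^{l-1}$ holds only modulo positive-order corrections whose structure must be tracked coefficient by coefficient. Turning the single factor of $p$ supplied by $p\mid C_{r-1}$ into a divisibility that propagates through the full nonlinear, carry-coupled system is the heart of the matter, and it is the only place where the precise shape of the band $\{\frac{p+1}{2},\dots,p-2\}$ and the hypothesis $p>3$ are genuinely used. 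I expect this propagation, rather than any single algebraic identity, to be the decisive difficulty.
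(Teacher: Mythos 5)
First, a point of orientation: the paper offers no proof of this statement — it is formulated as a conjecture supported only by the author's computations, so there is no argument of the paper's to compare yours against. What you have written is sound as far as it goes. The passage from \(c^{(p)}_{pn+i}=-s^{(p)}_{pn+i+1}\) to the vanishing of the digit series \(A_j\) for \(j\in M=\{\frac{p+3}{2},\dots,p-1\}\), the derivation of the system \((\ast)\) from \eqref{Sp_equation2}, the computation of the constant terms \(s^{(p)}_j\equiv(-1)^{j-1}C_{j-1}\) from the truncation of \eqref{Sp_equation}, and the Lucas-theorem verification that \(p\mid C_{j-1}\) for \(j\in M\) are all correct; so is your explicit elimination for \(p=5\), which genuinely reproves \(d_{5n+3}=0\). (Note, however, that the constant-term computation alone gives only the single value \(d_3=0\), not the whole progression, so the sentence attaching the claim ``reproves \(d_{5n+3}=0\)'' to the base case overstates what has been established at that point.)

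The gap is the one you name yourself, and it is not a technical loose end but the entire content of the conjecture. Your descent on \(\nu=\min_{r\in M}\operatorname{ord}(A_r)\) closes only if the inhomogeneous vector \(\vec B\) — the order-\(\nu\) part of the restricted convolutions \(\sum_{l,\,r-l\notin M}A_{r-l}A_l\) — vanishes, and this must hold at \emph{every} order \(\nu\); that is, you need the power-series identity \(\sum_{l,\,r-l\notin M}A_{r-l}A_l=0\) for each \(r\in M\). That identity is equivalent to the conjecture itself: granting \(A_r=0\) for all \(r\in M\), equation \((\ast)\) forces it (the carry sum and the terms with an index in \(M\) drop out), and conversely it makes your descent close. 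So the descent reformulates the problem rather than reducing it. Worse, \(\vec B\) is not controlled by the descent hypothesis at all — it involves only indices outside \(M\), about which the hypothesis says nothing — and the low-band series cannot be solved for in closed form first and then substituted, because \((\ast)\) for small \(r\) couples them back to the \(A_l\) with \(l\in M\) through the carry sums. The heuristic \(A_l\approx C_{l-1}A_1^l/D^{l-1}\) explains why the leading coefficient of the right-hand side is \(C_{r-1}\equiv 0\), but the correction terms are exactly where the proof has to happen, and for \(p\ge 7\) (where \(|M|=\frac{p-3}{2}\ge 2\) and the bands interact nontrivially) no mechanism for controlling them is given. As it stands you have a correct proof for \(p=5\), a correct evaluation of the \(n=0\) terms for all \(p\), and a plausible framework — but the general statement remains, as in the paper, a conjecture.
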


\begin{con} \label{conj_2}
For any prime number \(p\), the frequency of \(0\)'s in the sequence 
\begin{math} (c_n^{(p)})_{n \in \naturals} \end{math}
is equal to \(1\) and we have arbitrarily long strings of consecutive \(0\)'s. Moreover, the maximal number of consecutive \(1\)'s, \(2\)'s, \(3\)'s, \dots, \((p-1)\)'s is finite and for \(p>3\) it is not greater than \(\frac{p+3}{2}\).
\end{con}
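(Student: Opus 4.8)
The plan is to treat the statement as two logically independent claims and to attack them with the two tools the paper has already developed for small primes: the block structure of forced zeros and the synchronizing mechanism behind the frequency results. The bound on runs of a single nonzero letter reduces cleanly to Conjecture \ref{conj_1}, whereas the frequency and zero-run assertions need a structural input about the automaton of $(c_n^{(p)})_{n\in\naturals}$ that we can presently verify only for $p\in\{2,3,5\}$.

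First I would dispose of the bound $\frac{p+3}{2}$, assuming Conjecture \ref{conj_1}. Under that conjecture, in every block of $p$ consecutive indices $\{pn,pn+1,\dots,pn+p-1\}$ the terms at the middle positions $i\in\{\frac{p+1}{2},\dots,p-2\}$ all vanish, so the only positions in a block that can carry a nonzero value are $0,1,\dots,\frac{p-1}{2}$ and $p-1$. A maximal run of nonzero terms is therefore confined to the last admissible position $p-1$ of one block together with the initial admissible positions $0,1,\dots,\frac{p-1}{2}$ of the next block (the indices $pn+p-1$ and $pn+p$ being consecutive integers). This yields a run of length at most $1+(\frac{p-1}{2}+1)=\frac{p+3}{2}$, bounded on both sides by the forced zeros at positions $p-2$ and $\frac{p+1}{2}$. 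Since a run of a fixed letter $j\in\{1,\dots,p-1\}$ is in particular a run of nonzero terms, its length is at most $\frac{p+3}{2}$, hence finite. This is exactly the block argument already used for $p=3$, where the forced zeros came from Corollary \hyperref[c_eq_part2]{\ref*{c_eq_zero}(\ref*{c_eq_part2})}, and for $p=5$, where $d_{5n+3}=0$ played the same role.

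For the frequency and zero-run claims I would reproduce the mechanism behind Corollaries \ref{c_zero_seq} and \ref{c_zero_freq}. Concretely, the aim is to show that the minimal automaton of $(c_n^{(p)})_{n\in\naturals}$ is synchronizing, with a synchronizing word $w$ of some fixed length $\ell=\ell(p)$ sending every state to a single absorbing state of output $0$; this is precisely what happens for $p=3$ (word $12$, absorbing state $[c_{9n+7}]$) and for $p=5$ (words $14,24,33,34,43$, absorbing state $[d_{5n+3}]$). Granting such a $w$, every $n$ whose base-$p$ expansion contains $w$ among its leading digits satisfies $c_n^{(p)}=0$. Arbitrarily long strings of zeros then follow as in Corollary \ref{c_zero_seq}: fixing the high-order digits to realize $w$ and letting the remaining $k$ low-order digits run freely produces $p^k$ consecutive indices sharing that leading block, hence $p^k$ consecutive zeros. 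The frequency statement follows by the counting argument of Corollary \ref{c_zero_freq}: splitting a length-$N\ell$ base-$p$ word into $N$ disjoint length-$\ell$ blocks, an index can be nonzero only if none of these blocks equals $w$, leaving at most $(p^\ell-1)^N$ admissible indices out of $p^{N\ell}$, and $(p^\ell-1)^N/p^{N\ell}\to 0$.

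The main obstacle is that both inputs above are established only for $p\in\{2,3,5\}$: Conjecture \ref{conj_1} and the existence of an absorbing $0$-output synchronizing word are read off from explicitly computed automata, and $p=7$ already exceeds the computational reach reported in the text. A uniform proof must therefore bypass the automaton and argue directly from the functional equation \eqref{Sp_equation2} and the recurrence of Proposition \ref{s_recurrence}. The most promising route is to analyze $S_p(X)\bigl(S_p(X^p)-S_p(X)-1\bigr)=-X$ modulo $p$, extracting recursive expressions for $s_{pn+i+1}^{(p)}$ in the middle residue classes $i\in\{\frac{p+1}{2},\dots,p-2\}$ and showing they vanish (which is Conjecture \ref{conj_1}), and then upgrading a single forced-zero residue into a synchronizing factor by tracking how the Frobenius identity $S_p^p=S_p(X^p)$ propagates zero-blocks across scales. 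Establishing this for all primes simultaneously, rather than prime by prime as the computations allow, is exactly the difficulty that keeps the statement a conjecture.
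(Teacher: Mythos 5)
The statement you are asked to prove is labelled a conjecture in the paper, and the paper offers no proof of it --- only the remark that Conjecture \ref{conj_1} implies the second half (the bound $\frac{p+3}{2}$ for $p>3$). Your proposal is, accordingly, not a proof but a correct diagnosis of what a proof would require, and the parts of it that do carry mathematical content check out. The reduction of the run-length bound to Conjecture \ref{conj_1} is right: the conjectured zeros sit at the residues $\frac{p+1}{2},\dots,p-2$ of each block of length $p$, so any run of nonzero terms is trapped between the forced zero at residue $p-2$ of one block and the forced zero at residue $\frac{p+1}{2}$ of the next, giving length at most $1+\frac{p+1}{2}=\frac{p+3}{2}$; this is exactly the implication the paper asserts without detail, and your count matches the known values ($4$ for $p=5$). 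Your plan for the frequency and zero-run claims --- exhibit a synchronizing word landing in an absorbing state of output $0$, then run the arguments of Corollaries \ref{c_zero_seq} and \ref{c_zero_freq} verbatim --- is likewise the mechanism the paper uses for $p\in\{2,3,5\}$ and is precisely the content of the paper's final conjecture on the structure of $A_p$.

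The genuine gap is the one you name yourself: both pillars (Conjecture \ref{conj_1}, and the existence for every prime $p$ of an absorbing-state synchronizing word) are established only by explicit computation for $p\le 5$, and no argument is given that derives either from the functional equation \eqref{Sp_equation2} or the recurrence of Proposition \ref{s_recurrence} uniformly in $p$. Your closing paragraph gestures at such a derivation but does not supply it, so the statement remains conditional --- which is consistent with its status in the paper. One small point worth adding if you write this up: the finiteness claim in Conjecture \ref{conj_2} is asserted for \emph{all} primes, and your block argument only covers $p>3$; for $p=2$ and $p=3$ you should cite the unconditional results (maximal run $4$ in both cases) rather than Conjecture \ref{conj_1}, which is stated only for $p>3$.
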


Note that if Conjecture \ref{conj_1} is true, then the second part of Conjecture \ref{conj_2} (the upper bound for \(p>3\)) is also true.

In the previous subsection, we discussed the structure of the automaton \(A_5\) generating the sequence 
\begin{math} (c_n^{(5)})_{n \in \naturals} \end{math}. 
Note that the automata \(A_2\) and \(A_3\) shown in Figure \ref{c2_auto} and Figure \ref{c_auto} have the similar structure. Moreover, all these automata are synchronizing. This observation allows us to formulate the final conjecture.

\begin{con}
For any prime number \(p\), the automaton 
\begin{math} A_p = (Q, \Sigma_p, \delta, [c_n^{(p)}], \Sigma_p, \tau) \end{math}
generating the sequence 
\begin{math} (c_n^{(p)})_{n \in \naturals} \end{math}
is synchronizing and it has the following structure:
\begin{enumerate}
\item The initial state \([c_n^{(p)}]\).
\item Two disjoint \(p\)-cycles: 
\begin{math} \{[c_{p^kn}^{(p)}] : 1 \leq k \leq p\} \end{math}
and
\begin{math} \{[c_{p^kn+p^k-1}^{(p)}] : 1 \leq k \leq p\} \end{math}.
\item The subset \(Q' \subset Q\) such that for any \(q_1, q_2 \in Q'\) there exists a word \(w \in \Sigma_p^\star\) such that \(\delta(q_1, w) = q_2\).
\item The terminal state \(\overline{q}\) (for any \(a \in \Sigma_p\) we have
\begin{math} \delta(\overline{q}, a) = \overline{q}) \end{math}.
\end{enumerate}
\end{con}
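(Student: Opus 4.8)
The plan is to reduce the entire statement to a description of the $p$-kernel of $(c_n^{(p)})_{n\in\naturals}$, equivalently (via $c_n^{(p)}=-s_{n+1}^{(p)}$ and $S_p=X(1-C_p)$) of $(s_n^{(p)})_{n\in\naturals}$, since the states of $A_p$ are in bijection with the distinct iterated decimations $\Lambda_i\colon(a_n)\mapsto(a_{pn+i})$. The engine is a closed system of identities for the first-level decimations $A_i=\Lambda_i S_p$ obtained from \eqref{Sp_equation2}. Writing $S_p(X)=\sum_{i=0}^{p-1}X^iA_i(X^p)$ and using that $S_p(X^p)=S_p(X)^p=S_p(Y)$ with $Y=X^p$ in characteristic $p$, equation \eqref{Sp_equation2} decomposes digit by digit into
\[
A_i(Y)\,S_p(Y)=\sum_{j+l=i}A_j(Y)A_l(Y)+Y\!\!\sum_{j+l=i+p}\!\!A_j(Y)A_l(Y)+A_i(Y)-[i=1]
\]
for $i=0,\dots,p-1$. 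This expresses every level-one decimation through $S_p$ and the $A_j$; iterating it generates the whole $p$-kernel, and the remaining task is to read off the coincidences and transitions that it forces.

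Second, I would isolate the \emph{backbone} --- the two $p$-cycles and the terminal state --- which is the part one can hope to control rigorously. The cycle $\{[c_{p^kn}^{(p)}]:1\le k\le p\}$ is the $\Lambda_0$-orbit of $[c_{pn}^{(p)}]$, and the second cycle is the $\Lambda_{p-1}$-orbit of $[c_{pn+p-1}^{(p)}]$; here I would use the $i=0$ and $i=p-1$ instances of the system above to show that $\Lambda_0$ (respectively $\Lambda_{p-1}$) acts as the shift $k\mapsto k+1$, and that the coincidence $[c_{p^{p+1}n}^{(p)}]=[c_{pn}^{(p)}]$ closing each cycle at length exactly $p$ follows from the algebraic relation rather than from the trivial Frobenius periodicity. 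The terminal state $\overline q$ should be the all-zero subsequence: I would show there is a digit pattern which, once it occurs among the low-order base-$p$ digits, forces $c_n^{(p)}=0$ --- a generalization of Corollary \hyperref[c_eq_part2]{\ref*{c_eq_zero}(\ref*{c_eq_part2})} --- so that the corresponding decimation is identically $0$ and is fixed by every $\Lambda_i$, exactly as $[c_{9n+7}^{(3)}]$ and $[d_{5n+3}^{(5)}]$ are.

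Third, for synchronization it is enough, since $\overline q$ is absorbing, to produce a single word $w$ with $\delta(q,w)=\overline q$ for all $q$. The forbidden-factor statement from the previous step, read as an input word (least significant digit first), is the natural candidate, and the five words found for $p=5$ suggest that $w$ can be taken among the length-two patterns realizing the vanishing indices of Conjecture \ref{conj_1}. Establishing that such a $w$ lands in $\overline q$ from \emph{every} state, and in particular from every state of the large block, is where the argument becomes delicate, because it requires controlling transitions out of states for which no closed form is available.

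Fourth, the genuine obstacle is the strongly connected component $Q'$: everything above pins down only a bounded, explicitly describable skeleton, whereas $Q'$ already contains $2224$ states for $p=5$ and has no evident closed form. Proving that $Q'$ is a single strongly connected class, that there are no transitions from lower levels back to higher ones (the claimed partial order on the four levels), and that no additional cycles hide inside $Q'$ would require turning the decimation system into a structural induction controlling \emph{all} reachable decimations simultaneously, not merely the $\Lambda_0$- and $\Lambda_{p-1}$-orbits. Moreover, both the exact cycle length $p$ and the existence of a short synchronizing word ultimately rest on Conjecture \ref{conj_1}, which is itself open; so a complete proof presupposes, or must be carried out jointly with, a proof of Conjecture \ref{conj_1}. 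I therefore expect the decimation system to settle the backbone and the synchronization mechanism, while the global topology of $Q'$ remains the main unresolved difficulty.
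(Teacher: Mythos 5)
This statement appears in the paper as a conjecture, not a theorem: the paper gives no proof, only the empirical observation that the automata computed for \(p\in\{2,3,5\}\) (with \(8\), \(28\) and \(2236\) states, respectively) all display this four-level structure and are all synchronizing. So there is no argument in the paper to compare yours against, and your proposal has to be judged as a research program. As such, its starting point is sound: the digit-by-digit decomposition of equation \eqref{Sp_equation2} that you write down is algebraically correct (it is the same factorization \(S_p(X)W_p(X)=-X\) that drives Proposition \ref{s_recurrence}, pushed one level into the \(p\)-kernel, using \(S_p(X)^p=S_p(X^p)\) over \(\mathbb{F}_p\)), and your identification of the two cycles as the \(\Lambda_0\)- and \(\Lambda_{p-1}\)-orbits and of \(\overline q\) as an identically zero decimation is consistent with the computed cases (terminal states \([c^{(2)}_{8n+3}]\), \([c_{9n+7}]\), \([d_{5n+3}]\)).

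However, you have set up machinery rather than proved anything, and the gaps are real; you name most of them, but one is understated. First, even the ``backbone'' is not established: showing that the \(\Lambda_0\)-cycle has length exactly \(p\) requires proving that the \(p\) decimations \([c^{(p)}_{p^kn}]\), \(1\le k\le p\), are pairwise distinct as sequences and disjoint from the other cycle, from \(Q'\) and from \(\overline q\); the decimation identities alone do not separate sequences, so this needs quantitative information about \((c^{(p)}_n)_{n\in\naturals}\) that you do not supply. Second, the existence of an identically vanishing decimation reachable from every state is essentially a strengthening of Conjecture \ref{conj_1}, which is itself open, so your synchronization step is conditional. Third, and most seriously, the strong connectivity of \(Q'\) and the absence of edges from lower levels to higher ones is a global assertion about a state set of a priori unknown (and, for \(p=5\), already very large) size, and nothing in your decimation system constrains it; you correctly flag this as the main unresolved difficulty. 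A complete proof would in effect have to settle Conjectures \ref{conj_1} and \ref{conj_2} along the way, which is presumably why the paper leaves the statement as a conjecture.
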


\section{The Rudin--Shapiro sequence} \label{section_rudin}
\subsection{Basic results}

The Rudin--Shapiro sequence is another common example of a \(2\)-automatic sequence. It takes only the values \(1\) and \(-1\) and its \(n\)-th term is defined as \(1\) if the number of (possibly overlapping) occurrences of \(11\) in the binary expansion of \(n\) is even, and \(-1\) otherwise.

In this section, we are going to use a slightly different version of the Rudin--Shapiro sequence, with all the \(-1\) terms changed into \(0\). We denote the new sequence by
\begin{math} (r_n)_{n \in \naturals} \end{math}. 
We regard its values \(r_n\) as elements in \(\mathbb{F}_2\). It is clear that the sequence 
\begin{math} (r_n)_{n \in \naturals} \end{math}
satisfies the following recurrence relations:
\begin{equation} \label{r_recurrence}
r_0 = 1, \quad r_{2n} = r_{4n+1} = r_n, \quad r_{4n+3} = 1 + r_{2n+1},
\end{equation}
for all \(n \in \naturals\). From these equations, it is easy to determine that the sequence 
\begin{math} (r_n)_{n \in \naturals} \end{math}
is generated by the following automaton:

\begin{figure}[ht!]
\begin{center}
\begin{tikzpicture}[->, shorten >= 1pt, node distance=2.5 cm, on grid, auto]
	\node[elliptic state, inner sep=3pt, line width = 1.5pt] (c_1) {\(r_n\)};
	\node[elliptic state, inner sep=2pt] (c_2) [right=of c_1] {\(r_{2n+1}\)};
	\node[elliptic state, inner sep=2pt] (c_3) [right=of c_2] {\(r_{4n+3}\)};
    \node[elliptic state, inner sep=2pt] (c_4) [right=of c_3] {\(r_{8n+3}\)};	
	\path[->]
	 (c_1) edge [loop above] node {0} (c_1)
	       edge [bend left] node {1} (c_2)
	 (c_2) edge [bend left] node {0} (c_1)
	       edge [bend left] node {1} (c_3)
	 (c_3) edge [bend left] node {0} (c_4)
	       edge [bend left] node {1} (c_2)
	 (c_4) edge [loop above] node {0} (c_4)
	       edge [bend left] node {1} (c_3);
\end{tikzpicture}
\vspace{-0.2 cm}
\caption{The automaton generating the sequence \((r_n)_{n \in \naturals}\). \label{r_auto}}
\end{center}
\end{figure}

Define the formal power series 
\begin{math} R = \sum_{n=0}^{\infty}r_nX^n \in \mathbb{F}_2[\![X]\!] \end{math}.
We can find an algebraic relation for the series \(R\) using equations \eqref{r_recurrence} (see \cite[Example 12.1.4]{AS} for details -- in this example, the letters \(0\) and \(1\) are interchanged but the recurrence relations are the same). The desired relation has the form
\begin{equation} \label{R_equation}
(1 + X)^5R^2 + (1 + X)^4R + X^3 = 0.
\end{equation}

Since \(r_0 = 1\), the composition inverse of the series \(R\) does not exist. In order for a composition inverse to exist, we need to modify the original sequence
\begin{math} (r_n)_{n \in \naturals} \end{math}.
We consider two different modifications: 
\begin{math} (r'_n)_{n \in \naturals} \end{math}
and
\begin{math} (r''_n)_{n \in \naturals} \end{math}.
Recall that these sequences are defined in the following way:
\begin{displaymath}
r'_n = \left\{ \begin{array}{ll}
0 & \text{ if } n = 0, \\ r_n & \text{ otherwise},
\end{array} \right.
\quad
r''_n = \left\{ \begin{array}{ll}
0 & \text{ if } n = 0, \\ r_{n-1} & \text{ otherwise}.
\end{array} \right.
\end{displaymath}

\subsection{\texorpdfstring{Formal inverse of the sequence \((r_n')_{n \in \naturals}\)}{Formal inverse of the sequence rn'}}

In the following part, we will focus on the sequence 
\begin{math} (r'_n)_{n \in \naturals} \end{math}
and its formal inverse. 

Let
\begin{math} R_1 = \sum_{n=0}^{\infty}r'_nX^n \in \mathbb{F}_2[\![X]\!] \end{math}.
It is clear that we have \(R_1 = R + 1\). Hence, from the equation \eqref{R_equation}, we obtain
\begin{equation} \label{R_equation1}
(R_1^2 + 1)X^5 + (R_1^2 + R_1)X^4 + X^3 + (R_1^2 + 1)X + R_1^2 + R_1 = 0.
\end{equation}

Denote the composition inverse of \(R_1\) by 
\begin{math} U = \sum_{n=0}^{\infty}u_nX^n \end{math}.
Left composing equation \eqref{R_equation1} with \(U\), we get the following equality:
\begin{equation} \label{U_equation}
(X^2 + 1)U^5 + (X^2 + X)U^4 + U^3 + (X^2 + 1)U + X^2 + X = 0.
\end{equation}
We can use equation \eqref{U_equation} to determine the automaton that generates the sequence
\begin{math} (u_n)_{n \in \naturals} \end{math}.
We again perform all the computations using Mathematica. The obtained automaton
\begin{math} A_U = (Q, \Sigma_2, \delta, [u_n], \Sigma_2, \tau) \end{math}
has \(23\) states and it is shown in Figure \ref{u_auto}. The output values of this automaton are shown in Table \ref{u_out}.

\begin{figure}
\begin{center}
\includegraphics[width=12 cm]{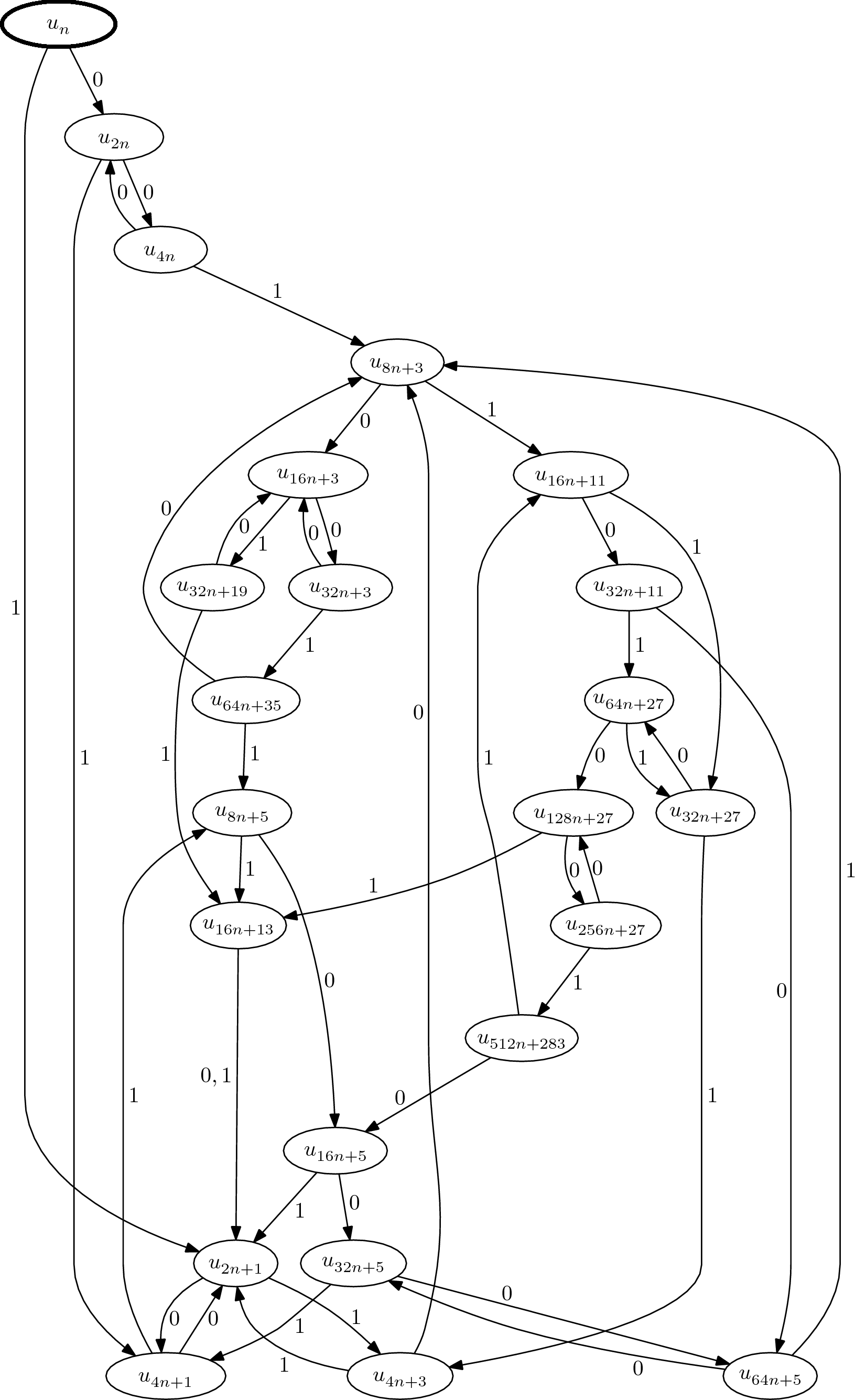}
\caption{The automaton \(A_U\) generating the sequence \((u_n)_{n \in \naturals}\). \label{u_auto}}
\end{center}
\end{figure}

\begin{table}[ht!] 
\centering
\begin{tabular}{|c|c||c|c||c|c||c|c|}
\hline
\(q\) & \(\tau(q)\) & \(q\) & \(\tau(q)\) & \(q\) & \(\tau(q)\) & \(q\) & \(\tau(q)\) \\
\hline
\([u_n]\) & \(0\) & \([u_{8n+3}]\) & \(0\) & \([u_{32n+3}]\) & \(0\) & \([u_{64n+27}]\) & \(0\) \\
\hline
\([u_{2n}]\) & \(0\) & \([u_{8n+5}]\) & \(1\) & \([u_{32n+5}]\) & \(1\) & \([u_{64n+35}]\) & \(0\) \\
\hline
\([u_{2n+1}]\) & \(0\) & \([u_{16n+3}]\) & \(1\) & \([u_{32n+11}]\) & \(1\) & \([u_{128n+27}]\) & \(0\) \\
\hline
\([u_{4n}]\) & \(0\) & \([u_{16n+5}]\) & \(1\) & \([u_{32n+19}]\) & \(0\) & \([u_{256n+27}]\) & \(0\) \\
\hline
\([u_{4n+1}]\) & \(1\) & \([u_{16n+11}]\) & \(1\) & \([u_{32n+27}]\) & \(0\) & \([u_{512n+283}]\) & \(1\) \\
\hline
\([u_{4n+3}]\) & \(0\) & \([u_{16n+13}]\) & \(1\) & \([u_{64n+5}]\) & \(0\) & & \\
\hline
\end{tabular}
\caption{The output values of the automaton \(A_U\). \label{u_out}}
\end{table}

Note that this automaton is not synchronizing (in contrast to the automata obtained in previous section). To prove this fact, we consider the following subset of states:
\begin{align*}
Q' = \{&[u_n], [u_{4n}], [u_{4n+1}], [u_{4n+3}], [u_{16n+3}], [u_{16n+5}], [u_{16n+11}], \\
&[u_{16n+13}], [u_{64n+5}], [u_{64n+27}], [u_{64n+35}], [u_{256n+27}]\}.
\end{align*}
Let \(Q'' = Q \setminus Q'\). From the automaton \(A_U\) we get that
\begin{align*}
\{\delta(q, 0), \delta(q, 1)\} \subset Q'' \quad &\text{for all } q \in Q', \\
\{\delta(q, 0), \delta(q, 1)\} \subset Q' \quad &\text{for all } q \in Q'',
\end{align*}
hence there is no such a word \(w \in \Sigma_2^{\star}\) such that \(\delta(q, w)\) is the same for all \(q \in Q\). 

\begin{rem}
Based on the automaton \(A_U\), one can easily construct another automaton generating the sequence 
\begin{math} (u_n)_{n \in \naturals} \end{math}
with input in base \(4\). This automaton has \(12\) states, corresponding to the states from the set \(Q'\) defined above. It can be shown that this automaton is synchronizing and the shortest synchronizing word is \(33\).
\end{rem}

We will now focus on the properties of the sequence
\begin{math} (u_n)_{n \in \naturals} \end{math},
based on the automaton \(A_U\). We start with the following result.

\begin{thm} \label{u_consec}
The sequence
\begin{math} (u_n)_{n \in \naturals} \end{math}
contains arbitrarily long sequences of consecutive \(0\)'s and arbitrarily long sequences of consecutive \(1\)'s.
\end{thm}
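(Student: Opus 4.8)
The plan is to adapt the explicit-family method used in the proof of Theorem \ref{c_consec_12} (together with the leading-block idea of Corollary \ref{c_zero_seq}) to the automaton \(A_U\). For a fixed binary pattern \(P\) whose most significant bit is \(1\) and an even integer \(k\), consider the \(2^k\) consecutive integers \(n = w + 2^k\,\mathrm{val}(P)\) with \(0 \le w < 2^k\); their binary representations are exactly \(P\) followed by the \(k\)-bit block \(w\) (including the leading zeros of \(w\), since the leading \(1\) of \(P\) fixes the total length). Reading the least significant digit first, \(A_U\) first processes the \(k\) free bits of \(w\), reaching some state of \(S_k = \{\delta([u_n], x) : |x| = k\}\), and only afterwards processes the fixed bits of \(P\). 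Hence \(u_n = \tau(\delta(q_w, P))\) with \(q_w \in S_k\), so it suffices to choose \(P\) so that reading it yields the same output from every state of \(S_k\); the run length \(2^k\) then grows without bound as \(k \to \infty\).

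To produce such a \(P\) I would exploit the base-\(4\) automaton on the state set \(Q'\) described in the preceding remark, which is synchronizing with shortest synchronizing word \(33\). Since the base-\(4\) digit \(3\) corresponds to the two binary digits \(1,1\), reading the binary word \(1111\) collapses every state of \(Q'\) to a single state \(q^\star\). Because \([u_n] \in Q'\) and the transitions alternate between \(Q'\) and \(Q''\) (as established just before this theorem), for even \(k\) we have \(S_k \subseteq Q'\); thus if the four least significant bits of \(P\) are \(1111\), each \(q_w\) is first driven to the common state \(q^\star\), after which the remaining higher bits \(\rho\) of \(P\) act deterministically. Writing \(P = \rho\,1111\), all \(2^k\) integers then satisfy \(u_n = \tau(\delta(q^\star, \rho))\), a single value independent of \(w\).

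It remains to realize both output values. Taking \(\rho\) empty gives \(P = 1111\) and hence arbitrarily long runs of the constant value \(\tau(q^\star)\). For the opposite value I would choose \(\rho\) (with leading bit \(1\), so that \(P\) still has a leading \(1\)) such that \(\delta(q^\star, \rho)\) lands on a state with the other output; this is possible because \(Q'\) already contains states of both outputs — for example \([u_{4n}]\) with output \(0\) and \([u_{4n+1}]\) with output \(1\) in Table \ref{u_out} — and both are reachable from \(q^\star\). The main obstacle is precisely this last reachability check: since \(A_U\) is \emph{not} synchronizing, there is no single universal word forcing a fixed output, so instead of invoking a synchronizing word as in Corollary \ref{c_zero_seq} one must verify, by a finite inspection of \(A_U\) (equivalently of its \(12\)-state base-\(4\) reduction), that from \(q^\star\) one can read a word reaching a state of each output. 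Once this is confirmed, the two families \(P = 1111\) and \(P = \rho\,1111\) yield arbitrarily long runs of \(0\)'s and of \(1\)'s, respectively, completing the proof.
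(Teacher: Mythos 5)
Your proposal is correct and follows essentially the same route as the paper: both obtain the runs from integers of the form $(\text{pattern})\cdot 2^k + w$ with $2^k$ free low-order bits, relying on a finite inspection of $A_U$ showing that the fixed word read after those free bits forces a constant output. The paper simply carries out the verification you defer, exhibiting the words $1111001$ and $111101$ (each beginning with your synchronizing block $1111$), which force output $0$ and $1$ respectively from every state of $Q$ and yield the runs $u_{79\cdot 2^k},\dots,u_{80\cdot 2^k-1}$ and $u_{47\cdot 2^k},\dots,u_{48\cdot 2^k-1}$.
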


\begin{proof} We use the automaton \(A_U\). It can be verified that for all \(q \in Q\) we have
\begin{equation} \label{states_eq}
\delta(q, 1111001) \in \{[u_{4n+3}], [u_{32n+19}]\}.
\end{equation}

Let \(k \in \naturals\) and consider \(n = 79 \cdot 2^k\). Then
\begin{displaymath}
(n)_2 = 1001111\underbrace{00\dots 0}_k.
\end{displaymath}
From \eqref{states_eq} and the output values in Table \ref{u_out}, we have
\begin{displaymath}
u_{n+m} = \tau(\delta([u_n], (n+m)_2^R)) \in \{\tau([u_{4n+3}]), \tau([u_{32n+19}])\} = \{0\}
\end{displaymath}
for
\begin{math} m \in \{0, 1, \dots, 2^k-1\} \end{math}.
Hence 
\begin{math} u_{79\cdot 2^k}, u_{79\cdot 2^k+1}, \dots, u_{80\cdot 2^k-1} \end{math}
is a string of \(2^k\) consecutive \(0\)'s for any \(k \in \naturals\). We can find a string of \(2^k\) consecutive \(1\)'s using the same method. In this case we have
\begin{displaymath}
\delta(q, 111101) \in \{[u_{8n+5}], [u_{16n+11}]\}
\end{displaymath}
for all \(q \in Q\) and 
\begin{math} \tau([u_{8n+5}]) = \tau([u_{16n+11}]) = 1 \end{math}.
The sequence of \(2^k\) consecutive \(1\)'s is therefore given by \begin{math} u_{47\cdot 2^k}, \, u_{47\cdot 2^k+1}, \dots u_{48\cdot 2^k-1} \end{math}.
\end{proof}

Note that in the original Rudin--Shapiro sequence 
\begin{math} (r_n)_{n \in \naturals} \end{math}
the maximal number of consecutive \(0\)'s and consecutive \(1\)'s is equal to \(4\) and the strings of that length appear in the sequence infinitely many times. Indeed, from equations \eqref{r_recurrence} we get that 
\begin{math} \{r_{4n+2}, r_{4n+3}\} = \{0, 1\} \end{math}
for all \(n \in \naturals\) and therefore we cannot have more than \(4\) consecutive \(0\)'s or \(1\)'s. On the other hand, let
\begin{displaymath}
p_n = 3 \cdot 4^{n+2} - 1, \quad q_n = 2 \cdot 4^{n+1} - 1,
\end{displaymath}
for \(n \in \naturals\). Using the automaton shown in Figure \ref{r_auto}, one can prove that
\begin{align*}
r_{p_n} &= r_{p_n + 1} = r_{p_n + 2} = r_{p_n + 3} = 0, \\
r_{q_n} &= r_{q_n + 1} = r_{q_n + 2} = r_{q_n + 3} = 1,
\end{align*}
for all \(n \in \naturals\).

As a consequence from Theorem \ref{u_consec}, we get the following result.

\begin{cor} \label{u_frequency} In the sequence 
\begin{math} (u_n)_{n \in \naturals} \end{math},
the frequencies of \(0\)'s and \(1\)'s do not exist.
\end{cor}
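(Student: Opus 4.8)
The plan is to derive a contradiction from the assumption that the frequency of $0$'s exists, exploiting the two families of arbitrarily long constant blocks produced in the proof of Theorem~\ref{u_consec}. For $N \geq 1$ write $A(N) = |\{n < N : u_n = 0\}|$, so that the frequency of $0$'s, if it exists, is the limit $\alpha = \lim_{N\to\infty} A(N)/N$. The key mechanism is that a run of $\ell$ identical letters whose location is comparable to $\ell$ forces a definite jump (or a definite non-jump) in the running density $A(N)/N$, and these two jumps turn out to be mutually inconsistent across the two families of blocks.

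First I would record the block locations extracted from Theorem~\ref{u_consec}: the positions $79\cdot 2^k,\dots,80\cdot 2^k-1$ carry $2^k$ consecutive $0$'s, and the positions $47\cdot 2^k,\dots,48\cdot 2^k-1$ carry $2^k$ consecutive $1$'s. Suppose, toward a contradiction, that $A(N)/N \to \alpha$. Then $A(N) = \alpha N + o(N)$, and since every endpoint considered below is a fixed multiple of $2^k$, the error becomes $o(2^k)$ as $k \to \infty$. From the zero-block we have the exact identity $A(80\cdot 2^k) - A(79\cdot 2^k) = 2^k$, while the asymptotic expansion gives $A(80\cdot 2^k) - A(79\cdot 2^k) = \alpha\cdot 2^k + o(2^k)$; dividing by $2^k$ and letting $k \to \infty$ forces $\alpha = 1$. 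From the one-block we have $A(48\cdot 2^k) - A(47\cdot 2^k) = 0$, since a run of $1$'s contains no $0$'s, whereas the same expansion yields $\alpha\cdot 2^k + o(2^k)$, so that $\alpha = 0$. The two conclusions are incompatible, hence the frequency of $0$'s cannot exist.

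Finally, since every $u_n \in \{0,1\}$, one has the exact identity $|\{n<N:u_n=0\}| + |\{n<N:u_n=1\}| = N$ for all $N$, so the existence of the frequency of $1$'s would entail the existence of the frequency of $0$'s. Therefore the frequency of $1$'s fails to exist as well, which completes the argument.

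The only point requiring genuine care, and the step I expect to be the main (if modest) obstacle, is the bookkeeping of the error term: one must observe that evaluating $A$ at arguments of the form $c\cdot 2^k$ with $c$ a fixed constant converts the $o(N)$ coming from the assumed limit into $o(2^k)$, so that after dividing the block-difference by $2^k$ the error genuinely vanishes in the limit. Everything else is immediate once the two constant runs of length $2^k$ from Theorem~\ref{u_consec} are in hand.
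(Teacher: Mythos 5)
Your proposal is correct and follows essentially the same route as the paper: both arguments exploit the two families of constant blocks $u_{79\cdot 2^k},\dots,u_{80\cdot 2^k-1}$ (all $0$) and $u_{47\cdot 2^k},\dots,u_{48\cdot 2^k-1}$ (all $1$) from Theorem~\ref{u_consec} to force the putative frequency into two incompatible values, and then use $|\{n<N: u_n=0\}|+|\{n<N: u_n=1\}|=N$ to transfer the nonexistence between the two letters. The only difference is cosmetic: you expand $A(c\cdot 2^k)=\alpha c\, 2^k+o(2^k)$ and compare block differences, while the paper phrases the same computation as a ratio identity between counting functions at $79\cdot 2^k$ and $80\cdot 2^k$.
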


\begin{proof} We prove that the frequency of \(1\)'s does not exist. Let \(k \in \naturals\) and denote the number of \(1\)'s among the first \(k\) terms of the sequence
\begin{math} (u_n)_{n \in \naturals} \end{math}
by \(N_k\). From the proof of Theorem \ref{u_consec} we have \begin{math} N_{79\cdot2^k} = N_{80\cdot2^k} \end{math}. Therefore, we get
\begin{displaymath}
\frac{N_{79 \cdot 2^k}}{79 \cdot 2^k} = \frac{80}{79} \cdot \frac{N_{80 \cdot 2^k}}{80 \cdot 2^k}.
\end{displaymath}
Hence, if the limit
\begin{displaymath}
\lim_{k \to \infty}\frac{N_k}{k}
\end{displaymath}
exists, it has to be equal to \(0\). Therefore, if the frequency of \(1\)'s exists, it is equal to \(0\). However, we can use similar reasoning for the frequency of \(0\)'s and deduce that it is also equal to \(0\), which is a contradiction.
\end{proof}

\begin{rem}
This is in the strong contrast with the original Rudin--Shapiro sequence. One can prove that in the sequence
\begin{math} (r_n)_{n \in \naturals} \end{math}
the frequencies of \(0\)'s and \(1\)'s exist and they are both equal to \(\frac{1}{2}\).
\end{rem}

\subsection{\texorpdfstring{Formal inverse of the sequence \((r_n'')_{n \in \naturals}\)}{Formal inverse of the sequence rn''}}

In the next part of this section, we will introduce the properties and the recurrence relations for the formal inverse of the sequence 
\begin{math} (r''_n)_{n \in \naturals} \end{math}. 

Let
\begin{math} R_2 = \sum_{n=0}^{\infty}r''_nX^n \in \mathbb{F}_2[\![X]\!] \end{math}.
Then it is clear that \(R_2 = XR\). Denote the composition inverse of the series \(R_2\) by 
\begin{math} V = \sum_{n=0}^{\infty}v_nX^n \end{math}.
From equation \eqref{R_equation} we get
\begin{equation} \label{R_equation2}
(R_2^2 + R_2 + 1)X^5 + R_2^2X^4 + (R_2^2 + R_2)X + R_2^2 = 0.
\end{equation}
Hence, after left composing equation \eqref{R_equation2} with \(V\), we obtain
\begin{equation} \label{V_equation}
(X^2 + X + 1)V^5 + X^2V^4 + (X^2 + X)V + X^2 = 0.
\end{equation}

Equation \eqref{V_equation} can be used to find the automaton
\begin{math} A_V = (Q, \Sigma_2, \delta, [v_n], \Sigma_2, \tau) \end{math}
that generates the sequence 
\begin{math} (v_n)_{n \in \naturals} \end{math}.
As before, all the computations were performed in Mathematica. The obtained automaton has \(33\) states. It is shown in Figure \ref{v_auto} with output values shown in Table \ref{v_out}.

\begin{figure}
\begin{center}
\includegraphics[width=12 cm]{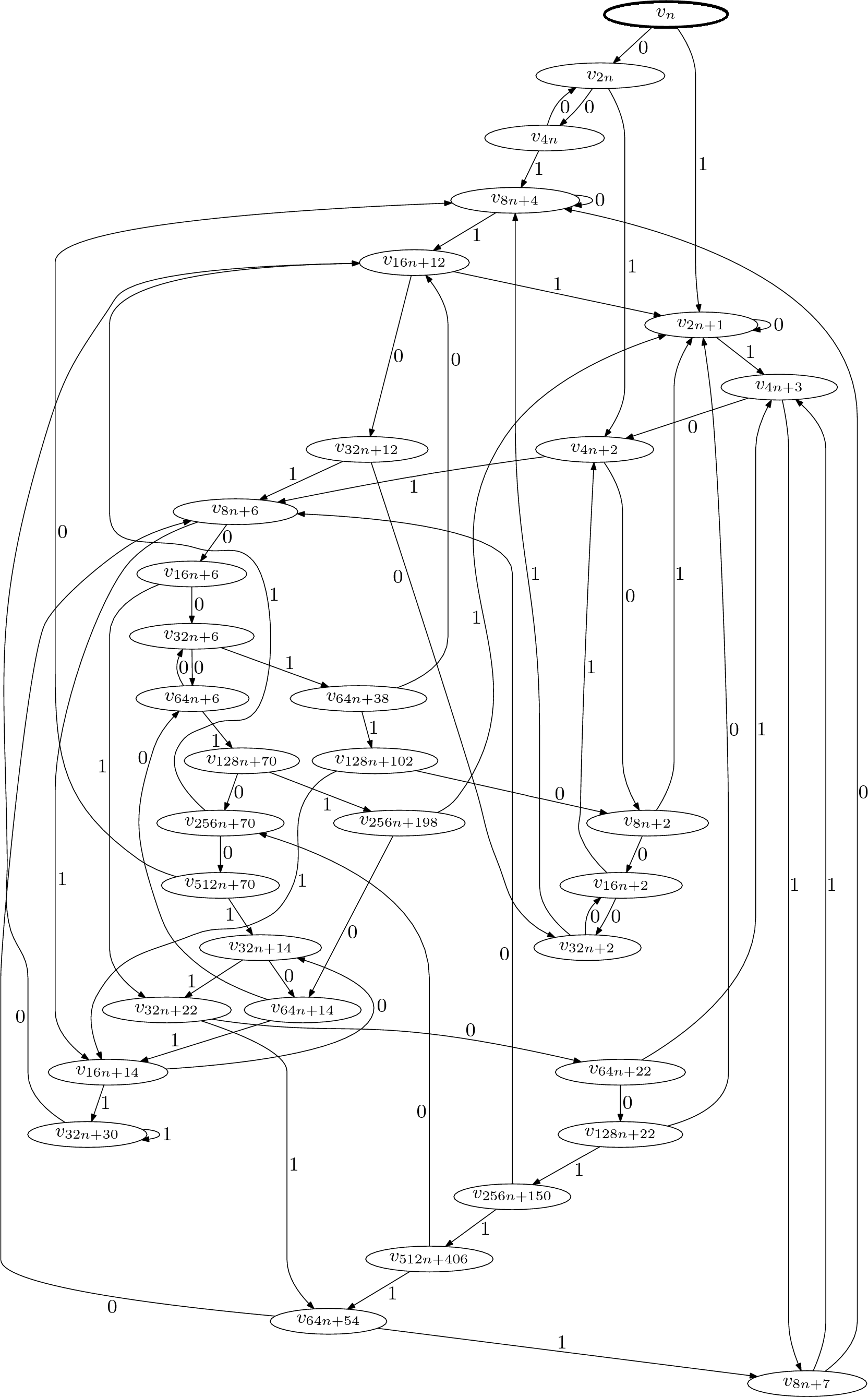}
\caption{The automaton \(A_V\) generating the sequence \((v_n)_{n \in \naturals}\). \label{v_auto}}
\end{center}
\end{figure}

\begin{table}[ht!] 
\centering
\begin{tabular}{|c|c||c|c||c|c|}
\hline
\(q\) & \(\tau(q)\) & \(q\) & \(\tau(q)\) & \(q\) & \(\tau(q)\) \\
\hline
\([v_n]\) & \(0\) & \([v_{16n+6}]\) & \(0\) & \([v_{64n+22}]\) & \(1\) \\
\hline
\([v_{2n}]\) & \(0\) & \([v_{16n+12}]\) & \(1\) & \([v_{64n+38}]\) & \(1\) \\
\hline
\([v_{2n+1}]\) & \(1\) & \([v_{16n+14}]\) & \(0\) & \([v_{64n+54}]\) & \(0\) \\
\hline
\([v_{4n}]\) & \(0\) & \([v_{32n+2}]\) & \(1\) & \([v_{128n+22}]\) & \(1\) \\
\hline
\([v_{4n+2}]\) & \(1\) & \([v_{32n+6}]\) & \(0\) & \([v_{128n+70}]\) & \(0\) \\
\hline
\([v_{4n+3}]\) & \(1\) & \([v_{32n+12}]\) & \(1\) & \([v_{128n+102}]\) & \(1\) \\
\hline
\([v_{8n+2}]\) & \(1\) & \([v_{32n+14}]\) & \(0\) & \([v_{256n+70}]\) & \(0\) \\
\hline
\([v_{8n+4}]\) & \(0\) & \([v_{32n+22}]\) & \(1\) & \([v_{256n+150}]\) & \(0\) \\
\hline
\([v_{8n+6}]\) & \(0\) & \([v_{32n+30}]\) & \(1\) & \([v_{256n+198}]\) & \(0\) \\
\hline
\([v_{8n+7}]\) & \(0\) & \([v_{64n+6}]\) & \(0\) & \([v_{512n+70}]\) & \(0\) \\
\hline
\([v_{16n+2}]\) & \(1\) & \([v_{64n+14}]\) & \(0\) & \([v_{512n+406}]\) & \(0\) \\
\hline
\end{tabular}
\caption{The output values of the automaton \(A_V\). \label{v_out}}
\end{table}

In the last part of this section, we will prove some properties of the sequence 
\begin{math} (v_n)_{n \in \naturals} \end{math}
analogous to the properties of the sequence
\begin{math} (u_n)_{n \in \naturals} \end{math}
presented in Theorem \ref{u_consec} and Corollary \ref{u_frequency}.

\begin{thm} \label{v_consec} The sequence
\begin{math} (v_n)_{n \in \naturals} \end{math}
contains arbitrarily long sequences of consecutive \(0\)'s.
\end{thm}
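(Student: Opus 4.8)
The plan is to imitate the proof of Theorem~\ref{u_consec}. The decisive ingredient there was a single input word that drives $A_U$, from \emph{every} starting state, into a set of states all carrying the output value $0$; here I seek the analogous word for $A_V$. Concretely, I would first search the transition structure underlying Figure~\ref{v_auto} for a word $w \in \Sigma_2^\star$ and a subset $S \subseteq Q$ with $\tau(s) = 0$ for all $s \in S$, such that $\delta(q, w) \in S$ for every $q \in Q$. Table~\ref{v_out} exhibits many zero-output states (for instance $[v_{8n+7}]$, $[v_{16n+14}]$ and $[v_{64n+54}]$), which makes such a collapse into the zero-output part of the automaton plausible even though $A_V$ need not be synchronizing in the strict sense, just as $A_U$ was not.

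Once such a word $w$ is found, the construction is identical to the one in Theorem~\ref{u_consec}. Let $N$ be the integer with $(N)_2 = w^R$ and, for $k \in \naturals$, set $n = N \cdot 2^k$, so that
\begin{displaymath}
(n)_2 = w^R\underbrace{00\dots 0}_k.
\end{displaymath}
For any $m$ with $0 \le m < 2^k$ the expansion $(n+m)_2$ is $w^R$ followed by the $k$-digit binary expansion of $m$. Hence, reading $(n+m)_2$ from the least significant digit, the automaton first processes the $m$-block, arriving at some state $q$, and then processes $w$, landing in $S$ by the choice of $w$. Consequently
\begin{displaymath}
v_{n+m} = \tau(\delta([v_n], (n+m)_2^R)) \in \{\tau(s) : s \in S\} = \{0\}
\end{displaymath}
for all $m \in \{0, 1, \dots, 2^k - 1\}$, so $v_n, v_{n+1}, \dots, v_{n + 2^k - 1}$ is a run of $2^k$ consecutive zeros, and letting $k \to \infty$ gives arbitrarily long strings of $0$'s.

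The main obstacle is the first step: producing $w$ and $S$ and checking that $\delta(q, w) \in S$ for all $33$ states of $A_V$. This is a finite verification, carried out by tracing each state through the transition graph of Figure~\ref{v_auto}, and it is the only part that depends on the detailed structure of $A_V$ rather than on the general mechanism. I note that, in contrast to Theorem~\ref{u_consec}, the present statement asserts only the existence of long runs of $0$'s; I would not expect the symmetric argument to yield long runs of $1$'s, since there need be no word collapsing all states into the output-$1$ part of $A_V$.
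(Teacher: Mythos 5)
Your proposal follows essentially the same route as the paper: the paper's proof likewise finds a single word, namely $w = 101011$, with $\delta(q,101011) \in \{[v_{8n+7}], [v_{16n+14}], [v_{64n+54}]\}$ for all $q \in Q$ (exactly the zero-output states you single out), and then takes $n = 53\cdot 2^k$ (note $(53)_2 = 110101 = w^R$) to produce $2^k$ consecutive $0$'s. The only ingredient you leave unexhibited is the explicit word, which is precisely the finite verification you describe, and your closing remark is also borne out by the paper: no analogous word into the output-$1$ states exists, and the runs of $1$'s are in fact bounded by $6$.
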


\begin{proof} The proof is similar to the proof of Theorem \ref{u_consec}. In this case, for all \(q \in Q\) we have
\begin{displaymath}
\delta(q, 101011) \in \{[v_{8n+7}], [v_{16n+14}], [v_{64n+54}]\}
\end{displaymath}
and
\begin{math} \tau([v_{8n+7}]) = \tau([v_{16n+14}]) = \tau([v_{64n+54}]) = 0 \end{math}.
Hence, for a given \(k \in \naturals\), we have a string of \(2^k\) consecutive \(0\)'s given by
\begin{math} v_{53\cdot2^k},\, v_{53\cdot2^k + 1}, \dots, v_{54\cdot2^k - 1} \end{math}.
\end{proof}

We can try to use the same method again to prove that in the sequence
\begin{math} (v_n)_{n \in \naturals} \end{math}
there are arbitrarily long strings of consecutive \(1\)'s. In other words, we need to find a word \(w \in \Sigma_2^\star\) such that
\begin{math} \tau(\delta(q, w)) = 1 \end{math}
for all \(q \in Q\). However, it turns out that in this case such a word does not exist and in fact the length of the string of consecutive \(1\)'s in the sequence
\begin{math} (v_n)_{n \in \naturals} \end{math}
is bounded. 

In order to prove this result, we start with the following lemma, which can be easily proved using \texttt{Walnut}.

\begin{lem} \label{walnut2} For all \(n \in \naturals\), at least one of the terms 
\begin{math} v_{4n+2}, \,v_{4n+3},\, v_{4n+4},\, v_{4n+5} \end{math}
is equal to \(0\).
\end{lem}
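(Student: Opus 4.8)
The plan is to reduce the statement to a decidable first-order sentence about the $2$-automatic sequence $(v_n)_{n \in \naturals}$ and to discharge it with \texttt{Walnut}, exactly as announced in the text. Since $(v_n)_{n \in \naturals}$ is generated by the automaton $A_V$ (reading base-$2$ digits least-significant-first), the lemma is literally the truth of the sentence
\[
\forall n \; \bigl( v_{4n+2} = 0 \;\vee\; v_{4n+3} = 0 \;\vee\; v_{4n+4} = 0 \;\vee\; v_{4n+5} = 0 \bigr),
\]
equivalently the assertion that the four values are never simultaneously equal to $1$. Because the $\mathbb{F}_2$-values, the addition of fixed constants, and the multiplication by the fixed constant $4$ are all expressible in the \texttt{lsd\_2} number system, \texttt{Walnut} can build the relevant product automaton and decide this sentence; its decision procedure is sound and complete for statements of this form, so a \texttt{TRUE} output constitutes a proof.

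Concretely I would carry out three steps. First, I would export $A_V$ (its $33$ states, the transition data behind Figure \ref{v_auto}, and the output column of Table \ref{v_out}) into a \texttt{Walnut} word-automaton file, taking care that the numeration is \texttt{lsd\_2} so that \texttt{Walnut}'s digit order matches the least-significant-first convention used throughout the paper. Second, I would issue the query \texttt{eval walnut2 "An (V[4*n+2]=@0 | V[4*n+3]=@0 | V[4*n+4]=@0 | V[4*n+5]=@0)":}. Third, I would read off the returned value; a \texttt{TRUE} answer establishes the lemma for all $n \in \naturals$ (including the boundary cases of small $n$, which the automaton handles automatically).

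It is worth recording why the decision succeeds, so that the argument is not a black box. The map $n \mapsto (v_{4n+2}, v_{4n+3}, v_{4n+4}, v_{4n+5})$ is itself $2$-automatic: reading the digits of $n$ from $A_V$ after the prefixes $01$ and $11$ lands in the states $[v_{4n+2}]$ and $[v_{4n+3}]$, while the terms $v_{4n+4} = v_{4(n+1)}$ and $v_{4n+5} = v_{4(n+1)+1}$ are obtained from the corresponding states by feeding the digits of $n+1$ in place of those of $n$. Thus one forms the product of four copies of $A_V$, augmented with a single carry bit that converts the digits of $n$ into the digits of $n+1$ on the fly (a standard add-one transducer), and the lemma becomes the finite, purely combinatorial check that the output $(1,1,1,1)$ is attached to no reachable state of this product. \texttt{Walnut} performs exactly this construction internally.

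The main obstacle is therefore not mathematical but one of faithful modeling: getting the automaton file, the digit order, and the affine index expressions $4n+i$ to agree precisely with the conventions of $A_V$, and in particular correctly modeling the carry needed for the indices $4n+4$ and $4n+5$. Once the encoding is verified to reproduce the first few values $v_2, v_3, v_4, \ldots$ from Table \ref{v_out}, the completeness of the decision procedure leaves no residual gap, and the \texttt{TRUE} verdict is conclusive.
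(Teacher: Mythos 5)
Your proposal matches the paper's approach exactly: the paper proves this lemma solely by invoking \texttt{Walnut} on the automaton $A_V$, which is precisely the verification you describe (your added explanation of the product-automaton-with-carry construction is a correct account of what \texttt{Walnut} does internally, but it is not a departure from the paper's method). The only practical caveat is the one you already flag yourself, namely ensuring the least-significant-digit-first encoding of $A_V$ is faithful before trusting the \texttt{TRUE} verdict.
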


As a consequence, we get the following.
\begin{thm} The maximal number of consecutive \(1\)'s in the sequence
\begin{math} (v_n)_{n \in \naturals} \end{math}
is equal to \(6\). Moreover, the set
\begin{displaymath}
\mathcal{V} = \{n \in \naturals : v_{n+i} = 1, \, i = 0, 1, \dots, 5\}
\end{displaymath}
is infinite.
\end{thm}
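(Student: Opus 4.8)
The plan is to establish the two halves of the statement separately: first the upper bound (no seven consecutive $1$'s), then the lower bound via an explicit infinite family witnessing six consecutive $1$'s, exactly mirroring the structure of the proof of Theorem \ref{u_consec}.

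For the upper bound, I would invoke Lemma \ref{walnut2}. Suppose for contradiction that $v_n = v_{n+1} = \dots = v_{n+6} = 1$ for some $n$, a block of seven consecutive $1$'s. The idea is that any window of seven consecutive indices must contain a full block of the form $\{4m+2, 4m+3, 4m+4, 4m+5\}$ for some $m$. Indeed, writing the seven indices as $n, n+1, \dots, n+6$, for each residue of $n$ modulo $4$ one checks directly that the set $\{n, \dots, n+6\}$ contains an interval $[4m+2, 4m+5]$; for instance if $n \equiv 2 \pmod 4$ then $m = \lfloor n/4 \rfloor$ works, and the other three cases are settled by an analogous shift. By Lemma \ref{walnut2}, at least one of $v_{4m+2}, v_{4m+3}, v_{4m+4}, v_{4m+5}$ equals $0$, contradicting that all seven terms equal $1$. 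Hence no seven consecutive $1$'s occur, so the maximal number is at most $6$.

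For the lower bound, I would produce an explicit sequence $(\ell_n)_{n \in \naturals}$ such that $\ell_n \in \mathcal{V}$ for all $n$, following the template of Theorem \ref{u_consec}. Concretely, I would search the automaton $A_V$ for a word $w \in \Sigma_2^\star$ such that, for every $q \in Q$, the state $\delta(q, w)$ lies in a set of states all having output value $1$ and moreover closing up under the short input blocks that realize a run of six consecutive indices. The natural candidate is a base-$2$ expansion of the form
\begin{displaymath}
(\ell_n)_2 = \alpha\,\underbrace{00\dots 0}_{n+1}\,\beta
\end{displaymath}
for suitable fixed words $\alpha, \beta$, where the trailing $\beta$ and the flexibility in the low-order digits guarantee that $\ell_n, \ell_n + 1, \dots, \ell_n + 5$ all map to states with $\tau = 1$. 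Computing $\delta([v_n], (\ell_n + i)_2)$ for $i \in \{0, 1, \dots, 5\}$ using Table \ref{v_out} then yields $v_{\ell_n + i} = 1$ throughout, so $\ell_n \in \mathcal{V}$ and $\mathcal{V}$ is infinite.

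The main obstacle is the lower bound, specifically exhibiting the correct family $(\ell_n)$: one must find a synchronizing-type prefix together with a suffix arrangement so that all six consecutive sums land in $\tau^{-1}(1)$ simultaneously, and verifying this requires tracking six parallel runs through $A_V$ across the varying middle block. The upper bound, by contrast, is essentially immediate once Lemma \ref{walnut2} is combined with the residue-counting observation that every length-seven window swallows a full block $[4m+2, 4m+5]$. I would therefore present the upper bound first and devote the bulk of the argument to locating and checking the witnessing family for $\mathcal{V}$.
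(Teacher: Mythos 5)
Your overall strategy is the same as the paper's: the upper bound comes from Lemma~\ref{walnut2}, and the lower bound from an explicit infinite family traced through the automaton \(A_V\) in the manner of Theorem~\ref{c_consec_12}. Your upper bound is complete and in fact more explicit than the paper's one-line assertion: the observation that every window of seven consecutive indices contains a full block \(\{4m+2,4m+3,4m+4,4m+5\}\) (equivalently, that the four consecutive integers \(n-2,\dots,n+1\) always include a nonnegative multiple of \(4\)) is precisely the glue the paper leaves implicit, and your case check on \(n \bmod 4\) is correct.

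The genuine gap is in the lower bound. You describe the template \((\ell_n)_2 = \alpha\,0^{n+1}\,\beta\) and the verification procedure, but you never produce \(\alpha\) and \(\beta\), and you yourself flag locating them as ``the main obstacle.'' Without a concrete witness, the claim that \(\mathcal{V}\) is infinite is not proved --- the entire content of this half of the theorem is the specific family. The paper takes \(\ell_n = 3\cdot 4^{n+3}+11\), that is, \((3\,0^{n+1}\,2\,3)_4\), with binary expansion \(11\,\underbrace{0\cdots0}_{2n+2}\,1011\); note that the run of zeros in base \(2\) has even length, so the family is naturally indexed in base \(4\), and your template with an unconstrained middle block \(0^{n+1}\) would at minimum need its parity pinned down before the six parallel runs through \(A_V\) can be made to stabilize on states of output \(1\). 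Once that family is written down, the verification is exactly the routine computation you describe: checking \(\tau(\delta([v_n],(\ell_n+i)_2))=1\) for \(i=0,1,\dots,5\) against Table~\ref{v_out}.
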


\begin{proof} By Lemma \ref{walnut2} we know that we cannot have more than \(6\) consecutive \(1\)'s in the sequence
\begin{math} (v_n)_{n \in \naturals} \end{math}.
Moreover, using the same method as in the proof of Theorem \ref{c_consec_12}, it can be checked that
\begin{math} 3\cdot 4^{n+3}+11 \in \mathcal{V} \end{math}
for all \(n \in \naturals\).
\end{proof}

\begin{rem} As an immediate consequence of the above result, we get that the automaton \(A_V\) is not synchronizing. Indeed, if it were synchronizing, we could use the same argument as in the proof of Theorem \ref{v_consec} to show that the sequence
\begin{math} (v_n)_{n \in \naturals} \end{math}
contains arbitrarily long series of consecutive \(1\)'s.
\end{rem}

In the previous subsection, we proved that the frequencies of \(0\)'s and \(1\)'s in the sequence
\begin{math} (u_n)_{n \in \naturals} \end{math}
do not exist. It turns out that the same is also true for the sequence
\begin{math} (v_n)_{n \in \naturals} \end{math}.
We are going to finish this section with the proof of that fact. We start with the following lemma.

\begin{lem} If the frequency of \(1\)'s in the sequence
\begin{math} (v_n)_{n \in \naturals} \end{math}
exists, then it is equal to \(0\). 
\end{lem}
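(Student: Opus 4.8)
The plan is to adapt the argument used in the proof of Corollary \ref{u_frequency}, but now relying only on the existence of arbitrarily long blocks of consecutive $0$'s guaranteed by Theorem \ref{v_consec}; we cannot appeal to blocks of consecutive $1$'s, since by the preceding results their length is bounded by $6$. For $k \in \naturals$, let $N_k$ denote the number of indices $j < k$ with $v_j = 1$, that is, the count of $1$'s among the first $k$ terms of the sequence $(v_n)_{n \in \naturals}$.

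First I would invoke Theorem \ref{v_consec}, which shows that for every $k \in \naturals$ the terms $v_{53 \cdot 2^k}, v_{53 \cdot 2^k + 1}, \dots, v_{54 \cdot 2^k - 1}$ form a string of $2^k$ consecutive $0$'s. Since no $1$ occurs in this block, the two partial counts at its endpoints coincide:
\[
N_{53 \cdot 2^k} = N_{54 \cdot 2^k}.
\]
Dividing by $53 \cdot 2^k$ and rewriting the right-hand side, this yields
\[
\frac{N_{53 \cdot 2^k}}{53 \cdot 2^k} = \frac{54}{53} \cdot \frac{N_{54 \cdot 2^k}}{54 \cdot 2^k}.
\]

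Finally, suppose the frequency $L = \lim_{k \to \infty} N_k / k$ exists. Then both subsequences $\bigl(N_{53 \cdot 2^k}/(53 \cdot 2^k)\bigr)_k$ and $\bigl(N_{54 \cdot 2^k}/(54 \cdot 2^k)\bigr)_k$ converge to $L$, so passing to the limit in the displayed identity gives $L = \tfrac{54}{53}\, L$; since $\tfrac{54}{53} \neq 1$, we conclude $L = 0$, as claimed. There is no genuine obstacle here beyond bookkeeping: the one point requiring care is matching the all-zero block supplied by Theorem \ref{v_consec} to the correct pair of indices $53 \cdot 2^k$ and $54 \cdot 2^k$, so that the gap between the endpoints is exactly the length $2^k$ of that block and the equality $N_{53 \cdot 2^k} = N_{54 \cdot 2^k}$ indeed holds.
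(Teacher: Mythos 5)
Your proposal is correct and is exactly the argument the paper intends: the paper's proof is only the remark that one argues as in Corollary \ref{u_frequency} with Theorem \ref{v_consec} in place of Theorem \ref{u_consec}, and you have carried out precisely that adaptation, correctly matching the all-zero block $v_{53\cdot 2^k},\dots,v_{54\cdot 2^k-1}$ to the identity $N_{53\cdot 2^k}=N_{54\cdot 2^k}$ and concluding $L=\tfrac{54}{53}L$, hence $L=0$.
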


The proof of this lemma is analogous to the proof of Corollary \ref{u_frequency} (here we use Theorem \ref{v_consec} instead of Theorem \ref{u_consec}). On the other hand, we have the following result.

\begin{lem} If the frequency of \(1\)'s in the sequence
\begin{math} (v_n)_{n \in \naturals} \end{math}
exists, then it is not smaller than \(\frac{1}{11}\).
\end{lem}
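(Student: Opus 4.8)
The plan is to assume that the frequency $\alpha := \lim_{k\to\infty} N_k/k$ of $1$'s exists, where $N_k = |\{n < k : v_n = 1\}|$, and to produce a sequence of indices $m_j \to \infty$ along which the proportion of $1$'s in the prefix $(v_0,\dots,v_{m_j-1})$ stays at least $\tfrac1{11}$; passing to the limit then forces $\alpha \ge \tfrac1{11}$. Together with the preceding lemma, which gives $\alpha = 0$, this is exactly what is needed to conclude that the frequency does not exist. The argument mirrors Corollary \ref{u_frequency}, except that there a block of $0$'s was used to show that the count $N_k$ stalls, whereas here we must instead exhibit stretches in which $N_k$ grows at a controlled positive rate.

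First I would reuse the block machinery of Theorems \ref{c_consec_12} and \ref{v_consec}. Reading the base-$2$ input of $A_V$ least-significant-digit first, the integers whose high-order bits spell a fixed number $a$ and whose low $k$ bits range over all $2^k$ patterns are exactly those in the window $[a\cdot 2^k, (a+1)\cdot 2^k)$, and for such an integer $v_{a\cdot 2^k + m} = \tau(\delta(\delta([v_n], (m)_2^R), W_a))$, where $W_a$ is the high-order word. Thus if $W_a$ herds every state into a small controlled set $S$ of states, the density of $1$'s inside the window depends only on how the low bits distribute among the states of $S$. The key step is to find such a word for which the induced behaviour on $S$ provably yields $1$'s with limiting density at least $\tfrac1{11}$, and to arrange (as in Theorem \ref{v_consec}) that such windows occur as the dominant block of a prefix, so that the prefix density is bounded below by $\tfrac1{11}$. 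Equivalently, one may introduce the vector counting function whose components count the $1$'s in the subsequence attached to each state, turn the splitting $n\mapsto 2n,\,2n+1$ into a linear recurrence for it, and extract a lower bound on the $[v_n]$-component along a suitable subsequence of scales.

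The main obstacle is certifying the exact constant $\tfrac1{11}$. Because Theorem \ref{v_consec} produces arbitrarily long runs of $0$'s, no ``at least one $1$ in every bounded window'' argument is available, so the bound is genuinely an averaged, asymptotic statement rather than a local one. Pinning it down therefore requires a careful accounting of the relevant reachable sub-automaton of $A_V$ — the same kind of computer-assisted verification used elsewhere in the paper, e.g.\ for the explicit formulas in the proof of Theorem \ref{c_consec_12} — or, equivalently, a positivity/eigenvalue analysis of the counting recurrence guaranteeing that the $1$-producing components never drop below the $\tfrac1{11}$ threshold along the chosen scales. Once that single quantitative estimate is in hand, the limit passage described in the first paragraph completes the proof.
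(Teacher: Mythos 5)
Your overall strategy --- exhibit prefixes of length tending to infinity in which the proportion of $1$'s is at least $\tfrac1{11}$, then pass to the limit --- is the right one, but the proposal stops exactly where the proof has to happen. You reduce everything to ``find a word for which the induced behaviour provably yields $1$'s with limiting density at least $\tfrac1{11}$'' and then defer that single quantitative estimate to an unspecified computer-assisted accounting or eigenvalue analysis. That estimate \emph{is} the lemma; without identifying the word(s), the window(s), and the reason the constant $11$ appears, nothing has been proved. Moreover, the shape you propose (one herding word $W_a$ and one window $[a\cdot 2^k,(a+1)\cdot 2^k)$ whose internal density of $1$'s is at least $\tfrac1{11}$) is not how the constant arises, and certifying an exact asymptotic density inside a single window of a non-synchronizing automaton is considerably harder than what is actually needed.

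The paper's key fact is a pairing observation, not a density computation: for every state $q$ of $A_V$ one has $1 \in \{\tau(\delta(q,0001)),\, \tau(\delta(q,0101))\}$. Reading input least-significant-digit first, this says that for every $k$ and every $m \in \{0,\dots,2^k-1\}$ at least one of $v_{8\cdot 2^k+m}$ and $v_{10\cdot 2^k+m}$ equals $1$. Hence the two windows $[8\cdot 2^k, 9\cdot 2^k)$ and $[10\cdot 2^k, 11\cdot 2^k)$ together contain at least $2^k$ ones, and both lie inside $\{0,\dots,11\cdot 2^k-1\}$, so $N_{11\cdot 2^k} \ge 2^k$ and the prefix density at length $11\cdot 2^k$ is at least $\tfrac1{11}$. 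This is a finite check on the $33$-state automaton (one value of $\tau(\delta(\cdot,w))$ per state for each of two words of length $4$); it requires no herding of all states into a small set, no analysis of how the low bits distribute among states, and no spectral argument. To repair your write-up, replace the deferred ``quantitative estimate'' with this disjunction over the two words $0001$ and $0101$; the constant $11$ is then simply the right endpoint of the second window.
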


\begin{proof} It can be verified that for all \(q \in Q\) we have
\begin{displaymath}
1 \in \{\tau(\delta(q, 0001)), \tau(\delta(q, 0101))\}.
\end{displaymath}
Therefore, for a given \(k \in \naturals\), we know that among the terms
\begin{displaymath}
v_{8 \cdot 2^k}, v_{8 \cdot 2^k + 1}, \dots, v_{9 \cdot 2^k - 1}, v_{10 \cdot 2^k}, v_{10 \cdot 2^{k-1}}, \dots, v_{11 \cdot 2^{k-1}}
\end{displaymath}
we have at least \(2^k\) terms that are equal to \(1\). Hence, we have at least \(2^k\) terms equal to \(1\) in the first \(11 \cdot 2^k\) terms of the sequence
\begin{math} (v_n)_{n \in \naturals} \end{math},
which completes the proof.
\end{proof} 

At the end, we arrive at the following corollary.

\begin{cor} In the sequence
\begin{math} (v_n)_{n \in \naturals} \end{math},
the frequencies of \(0\)'s and \(1\)'s do not exist.
\end{cor}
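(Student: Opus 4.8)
The plan is to combine the two preceding lemmas in the obvious way. The first lemma asserts that \emph{if} the frequency of $1$'s in $(v_n)_{n \in \naturals}$ exists, then it must equal $0$; the second lemma asserts that \emph{if} this frequency exists, then it is at least $\frac{1}{11}$. These two conclusions are incompatible, so the frequency of $1$'s cannot exist. First I would state this contradiction directly: assume for contradiction that the frequency of $1$'s exists and call it $\alpha$. By the first lemma $\alpha = 0$, while by the second lemma $\alpha \geq \frac{1}{11} > 0$, which is absurd. Hence the frequency of $1$'s does not exist.

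The second half --- that the frequency of $0$'s also fails to exist --- follows from the same nonexistence by a standard complementarity argument, which I would phrase as follows. Denote by $M_k$ the number of $0$'s among the first $k$ terms and by $N_k$ the number of $1$'s; since every term of $(v_n)_{n \in \naturals}$ lies in $\mathbb{F}_2$, i.e.\ equals either $0$ or $1$, we have $M_k + N_k = k$ for every $k$, and therefore
\begin{displaymath}
\frac{M_k}{k} = 1 - \frac{N_k}{k}.
\end{displaymath}
If the frequency of $0$'s existed, then $\lim_{k \to \infty} \frac{M_k}{k}$ would exist, and by the displayed identity $\lim_{k \to \infty} \frac{N_k}{k} = 1 - \lim_{k \to \infty} \frac{M_k}{k}$ would also exist, contradicting the nonexistence of the frequency of $1$'s already established. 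Thus neither frequency exists.

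There is essentially no obstacle here: the entire content of the corollary is carried by the two lemmas, and the corollary is a one-line logical deduction together with the elementary complementarity observation. The only point requiring any care is to make sure the two lemmas are being applied to the \emph{same} notion of limit (ordinary frequency, i.e.\ the limit of the proportion over initial segments), so that the ``equals $0$'' and ``at least $\frac{1}{11}$'' conclusions genuinely collide; once that is granted, the argument is immediate.
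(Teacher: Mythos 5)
Your argument is correct and is precisely the deduction the paper intends: the corollary is left without an explicit proof because it follows immediately from the two preceding lemmas (frequency of $1$'s, if it exists, is both $0$ and at least $\frac{1}{11}$), with the frequency of $0$'s handled by the same complementarity observation you give. Nothing is missing.
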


\acknowledgements
I would like to express my sincere gratitude to my Master's thesis supervisor, dr Jakub Byszewski, for numerous suggestions, comments and the continuous support. I also thank my friend Bart\l{}omiej Puget for providing me tools to create the graphs of the automata in Sections \ref{section_thue} and \ref{section_rudin}.

\nocite{*}
\bibliographystyle{plain}
\bibliography{sequences}
\end{document}